\numberwithin{equation}{section}
\newtheorem{thm}[equation]{Theorem}
\newtheorem{lem}[equation]{Lemma}
\newtheorem{cor}[equation]{Corollary}
\newtheorem{prop}[equation]{Proposition}
\newtheorem{hey}[equation]{Remark}
\newtheorem{eg}[equation]{Example}
\newtheorem{question}[equation]{Question}
\theoremstyle{definition}
\newtheorem{defin}[equation]{Definition}
\def\mod{{\rm mod\ }}
\def\Aut{{\rm Aut}}
\def\Z{{\mathbb Z}}
\def\Cay{{\rm Cay}}
\def\Stab{{\rm Stab}}
\begin{document}

\title{The CI problem for infinite groups}

\author{Joy Morris}
\address{Department of Mathematics and Computer Science \\
University of Lethbridge \\
Lethbridge, AB. T1K 3M4. Canada} \email{joy.morris@uleth.ca}
\thanks{This research was supported in part by the National Science and Engineering Research Council of Canada, Discovery Grant 238552-2011}
\maketitle

\begin{abstract}
A finite group $G$ is a DCI-group if, whenever $S$ and $S'$ are subsets of $G$ with the Cayley graphs Cay$(G,S)$ and Cay$(G,S')$ isomorphic, there exists an automorphism $\varphi$ of $G$ with $\varphi(S)=S'$. It is a CI-group if this condition holds under the restricted assumption that $S=S^{-1}$. We extend these definitions to infinite groups, and make two closely-related definitions: an infinite group is a strongly (D)CI$_f$-group if the same condition holds under the restricted assumption that $S$ is finite; and an infinite group is a (D)CI$_f$-group if the same condition holds whenever $S$ is both finite and generates $G$.

We prove that an infinite (D)CI-group must be a torsion group that is not locally-finite. We find infinite families of groups that are (D)CI$_f$-groups but not strongly (D)CI$_f$-groups, and that are strongly (D)CI$_f$-groups but not (D)CI-groups. We discuss which of these properties are inherited by subgroups. Finally, we completely characterise the locally-finite DCI-graphs on $\mathbb Z^n$. We suggest several open problems related to these ideas, including the question of whether or not any infinite (D)CI-group exists. 
\end{abstract}

\section{Introduction}

Although there has been considerable work done on the Cayley Isomorphism problem for finite groups and graphs, little attention has been paid to its extension to the infinite case.  

\begin{defin}
A Cayley (di)graph $\Gamma=\Cay(G;S)$ is a {\em (D)CI-graph} if whenever $\phi: \Gamma\to \Gamma'$ is an isomorphism, with $\Gamma'=\Cay(G;S')$, there is a group automorphism $\alpha$ of $G$ with $\alpha(S)=S'$ (so that $\alpha$ can be viewed as a graph isomorphism).
\end{defin}

Notice that since $\Aut(\Gamma)=\Aut(\overline{\Gamma})$ (where $\overline{\Gamma}$ denotes the complement of $\Gamma$) and any isomorphism from $\Gamma$ to $\Gamma'$ is also an isomorphism from $\overline{\Gamma}$ to $\overline{\Gamma'}$, a graph is a (D)CI-graph if and only if its complement is also a (D)CI-graph. Since at least one of $\Gamma$ and $\overline{\Gamma}$ must be connected, the problem of determining (D)CI-graphs can be reduced to the connected case.

This definition extends to a definition for groups.

\begin{defin}
A group $G$ is a {\em (D)CI-group} if every Cayley (di)graph on $G$ is a (D)CI-graph.
\end{defin}

These definitions (in the undirected case) as well as the following equivalent condition for a graph to be a (D)CI-graph, first appeared in work by Babai \cite{Baba-1}, extending a research problem posed by \`{A}d\`{a}m for cyclic groups \cite{Adam}.  There has been a large body of work on this topic, and Li published a survey paper \cite{Li-survey} outlining many of the results.   

\begin{thm}\label{CI-alt}\cite{Baba-1}
A Cayley (di)graph $\Gamma$ on the group $G$ is a (D)CI-graph if and only if any two regular copies of $G$ in $\Aut(\Gamma)$ are conjugate.
\end{thm}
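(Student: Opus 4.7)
The engine of the argument is the standard identification
\[
N_{{\rm Sym}(G)}(G_L)=G_L\rtimes\Aut(G),
\]
where $G_L$ is the left regular representation of $G$; equivalently, any permutation of $G$ that normalises $G_L$ and fixes the identity $e$ must itself be a group automorphism of $G$. This lets us promote ``combinatorial'' identifications between regular subgroups of $\Aut(\Gamma)$ to honest group automorphisms of $G$, and drives both directions of the equivalence.

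For necessity, suppose $\Gamma=\Cay(G,S)$ is a DCI-graph and let $H\le\Aut(\Gamma)$ be a second regular copy of $G$. Fix an abstract isomorphism $\beta\colon G\to H$ and define $\sigma\in{\rm Sym}(G)$ by $\sigma(g)=\beta(g)(e)$; regularity of $H$ makes $\sigma$ a bijection, and a direct check gives $\sigma G_L\sigma^{-1}=H$. Consequently $G_L$ acts regularly as automorphisms of $\sigma^{-1}(\Gamma)$ (the graph whose edges are pulled back from $\Gamma$ along $\sigma$), so $\sigma^{-1}(\Gamma)=\Cay(G,S')$ for some $S'\subseteq G$, and $\sigma$ realises a graph isomorphism $\Cay(G,S')\to\Gamma$. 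The DCI hypothesis then yields $\alpha\in\Aut(G)$ with $\alpha(S')=S$, so $\alpha\colon\Cay(G,S')\to\Gamma$ is also a graph isomorphism, whence $\sigma\alpha^{-1}\in\Aut(\Gamma)$. Since $\alpha^{-1}$ normalises $G_L$, conjugation delivers $(\sigma\alpha^{-1})G_L(\sigma\alpha^{-1})^{-1}=\sigma G_L\sigma^{-1}=H$ inside $\Aut(\Gamma)$, as required.

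For sufficiency, assume all regular copies of $G$ in $\Aut(\Gamma)$ are conjugate there, and let $\phi\colon\Gamma\to\Gamma':=\Cay(G,S')$ be a graph isomorphism. Conjugation by $\phi$ carries $\Aut(\Gamma)$ onto $\Aut(\Gamma')$, so the conjugacy hypothesis transports to $\Aut(\Gamma')$, producing $\psi\in\Aut(\Gamma')$ with $\psi G_L\psi^{-1}=\phi G_L\phi^{-1}$. Then $\psi^{-1}\phi$ normalises $G_L$, and the holomorph fact lets us write $\psi^{-1}\phi=L_c\alpha$ with $c\in G$ and $\alpha\in\Aut(G)$, where $L_c$ denotes left multiplication by $c$. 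Rearranging, $\alpha=L_{c^{-1}}\psi^{-1}\phi$ is obtained from the graph isomorphism $\phi\colon\Gamma\to\Gamma'$ by post-composing with the graph automorphisms $\psi^{-1}$ and $L_{c^{-1}}$ of $\Gamma'$, so $\alpha\colon\Gamma\to\Gamma'$ is itself a graph isomorphism; since $\alpha(e)=e$, comparing the neighbours of $e$ in the two graphs yields $\alpha(S)=S'$.

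The only non-bookkeeping input is the normaliser identification, which reduces to the short observation that if $\eta(e)=e$ and $\eta L_g\eta^{-1}=L_{\beta(g)}$ for all $g\in G$, then evaluating at $e$ forces $\eta(g)=\beta(g)$, so $\eta=\beta\in\Aut(G)$; this goes through verbatim for infinite $G$, which is what matters for the present paper. The main hazard is simply keeping straight which group acts on which graph along the chain of conjugations.
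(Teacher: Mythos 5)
Your proof is correct. The paper does not prove this statement at all --- it is quoted from Babai's 1977 paper --- and your argument is exactly the standard one from that source: the identification $N_{{\rm Sym}(G)}(G_L)=G_L\cdot\Aut(G)$, pulling a second regular copy back to $G_L$ via the orbit map $h\mapsto h(e)$ to manufacture a second connection set, and in the converse direction factoring a normalising element as a translation composed with a group automorphism. Every step goes through for digraphs and for infinite $G$ as you note, so nothing further is needed.
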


In the infinite case, it is natural to consider locally-finite (di)graphs: that is, (di)graphs whose valency is finite.  When studying Cayley (di)graphs, this means that the set $S$ is finite.  However, restricting our consideration to this case complicates matters, as the complement of a locally-finite (di)graph is not locally-finite.  For this reason, the standard argument made above that reduces the finite problem to the case of connected (di)graphs, does not apply to infinite (di)graphs that are locally-finite.  In other words, if one wishes to study this problem in the context of locally-finite (infinite) (di)graphs, it is necessary to consider disconnected as well as connected (di)graphs.

For this reason, we give two new definitions.  In the case of finite (di)graphs, both of these definitions coincide with the definition of a (D)CI-group, but in the infinite case they do not, and are themselves (we believe) worthy of study as natural generalisations of finite (D)CI-groups.

\begin{defin}
A finitely-generated group $G$ is a {\em (D)CI$_f$-group} if every connected locally-finite Cayley (di)graph on $G$ is a (D)CI-graph.
\end{defin}

Note that it is not possible to have a connected locally-finite Cayley (di)graph on a group that is not finitely-generated, so the requirement that the group be finitely-generated only serves to avoid a situation where all non-finitely-generated groups are vacuously CI$_f$-groups.

\begin{defin}
A group $G$ is a {\em strongly (D)CI$_f$-group} if every locally-finite Cayley (di)graph on $G$ is a (D)CI-graph.
\end{defin}

It should be apparent from these definitions that 
$$\text{(D)CI-group} \Rightarrow \text{strongly (D)CI$_f$-group}$$
and if we restrict our attention to finitely-generated groups, 
$$\text{strongly (D)CI$_f$-group} \Rightarrow \text{(D)CI$_f$-group}.$$

In this paper we will construct examples of groups that are (D)CI$_f$-groups but not strongly (D)CI$_f$-groups (despite being finitely generated) and groups that are strongly (D)CI$_f$-groups but not (D)CI-groups, so these definitions are interesting.  We further study these classes, particularly in the case of infinite abelian groups, including a complete characterisation of the locally-finite graphs on $\Z^n$ that are (D)CI-graphs.  We also prove that no infinite abelian group is a (D)CI-group, and that any (D)CI-group must be a torsion group that is not locally finite.  We leave open the question of whether or not any infinite (D)CI-groups exist.

In the only prior work that we are aware of on the CI problem for infinite graphs, Ryabchenko \cite{Ryab} uses the standard definition (the same one we gave above) for a CI-group, and claims to have proven that every finitely-generated free abelian group is a CI-group.  It is clear from his proofs that what he in fact shows is that $\Z$ is a strongly CI$_f$-group, and $\Z^n$ is a CI$_f$-group.  We will restate the results he actually proves in that paper using our terminology, as well as pointing out several consequences of his proofs that he did not mention.  We also show that $\Z^n$ is not a strongly (D)CI$_f$-group if $n>1$.  Ryabchenko cites a paper by Chuesheva as the main motivation for his paper, but the journal is obscure and the url he provides no longer exists, so we were not able to obtain a copy of this paper. L\"{o}h has published a paper \cite{Loeh} on the related question of when a graph can be represented as a Cayley graph on more than one finitely-generated infinite abelian group.

We will proceed from the strongest property to the weakest.  In Section 2, we will consider infinite (D)CI-groups, and prove that various large families of infinite groups cannot be (D)CI-groups; specifically, we show that any infinite CI-group must be a torsion group that is not locally finite.  (Since every DCI-group is a CI-group, this result carries over to the directed case.) In Section 3, we consider strongly CI$_f$-groups.  We construct an infinite family of such groups, but also prove that $\Z^n$ is not a strongly CI$_f$-group for $n>1$.  We show that every finitely-generated subgroup of a strongly CI$_f$-group is a CI$_f$-group, but leave open the question of whether or not all subgroups of strongly CI$_f$-groups are strongly CI$_f$-groups.  In Section 4, we consider CI$_f$-groups.  We show that without the condition of local-finiteness, connectedness is not sufficient to ensure that a Cayley graph on $\Z^n$ is a CI-graph.  We note that $\Z^n$ is a CI$_f$-group for every $n$.  In Section 5, we include the results from \cite{Ryab}.  We have slightly generalised as well as correcting the statements (which can be done using the same proofs), and include some easy corollaries of his proofs, showing that every locally-finite Cayley (di)graph on $\Z^n$ is a normal Cayley (di)graph, and in fact has a unique regular subgroup isomorphic to $\Z^n$. In this pre-print, we include complete proofs of our statements of his results, to avoid making the reader verify that his proofs do what we claim.  Finally, in Section 6, we completely characterise the locally-finite Cayley graphs on $\Z^n$ that are CI-graphs, and in particular show that if the number of connected components of the graph is sufficiently large relative to $n$, then the graph cannot be a CI-graph.

\section{CI-groups}

In this section of the paper, we demonstrate that various families of infinite groups are not CI-groups.  Since all DCI-groups are also CI-groups, this implies that these groups are not DCI-groups.  We also discuss the open questions that remain.

\begin{hey}\label{CI-hereditary}\cite{Baba-Fran-1}
We observe that the property of being a CI-group is inherited by subgroups.  
\end{hey}

There is a standard construction for the above fact, used for finite groups, that works equally well for infinite groups if we are not requiring that graphs be locally finite.  That is: if $H <G$ is not a CI-group, take a connected Cayley graph $\Gamma=\Cay(H;S)$ that is not a CI-graph (use a complement if necessary to ensure that the graph is connected).  Let $\Gamma'=\Cay(H;S')$ be an isomorphic graph that is not isomorphic via an automorphism of $H$.  Then $\Cay(G;S)$ and $\Cay(G;S')$ are clearly isomorphic, but any isomorphism must take connected components to connected components, so would restrict to an isomorphism from $\Gamma$ to $\Gamma'$ that cannot come from a group automorphism of $H$.

We now show that $\Z$ is not a CI-group.  Together with the preceding remark, this has strong consequences.

\begin{prop}\label{Z-not-CI}
The group $\Z$ is not a (D)CI-group.
\end{prop}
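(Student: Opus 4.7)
The strategy is to exhibit two symmetric subsets $S, S' \subset \Z$ giving isomorphic Cayley graphs that are not related by any group automorphism of $\Z$. Since $\Aut(\Z) = \{\pm \mathrm{id}\}$ and both $S, S'$ will be chosen symmetric (i.e.\ $S = -S$), the only possible image of $S$ under a group automorphism is $S$ itself; hence it suffices to arrange $S \neq S'$. Because every (D)CI-group is a CI-group, showing $\Z$ is not a CI-group settles both cases at once.

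The construction exploits the fact that $\Z_5$ has a group automorphism, multiplication by $2$, that does not lift to $\Aut(\Z)$. Since this automorphism sends $\{\pm 1\}$ to $\{\pm 2\}$, both $\Cay(\Z_5, \{\pm 1\})$ and $\Cay(\Z_5, \{\pm 2\})$ are copies of the $5$-cycle $C_5$. I lift these to $\Z$ by setting
\[
S = \{x \in \Z : x \equiv \pm 1 \pmod 5\}, \qquad S' = \{x \in \Z : x \equiv \pm 2 \pmod 5\}.
\]
Each of $\Cay(\Z, S)$ and $\Cay(\Z, S')$ is a ``blown-up'' $5$-cycle: the cosets $V_r = r + 5\Z$ ($r \in \Z_5$) are independent sets of size $\aleph_0$, and two cosets $V_r, V_s$ span a complete bipartite subgraph precisely when $r - s \equiv \pm 1 \pmod 5$ in the first graph, or $r - s \equiv \pm 2 \pmod 5$ in the second. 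Because both of these adjacency patterns form a $5$-cycle on the index set $\Z_5$, the graphs are isomorphic; an explicit isomorphism is given by $\phi(5k + r) := 5k + (2r \bmod 5)$ for $0 \le r < 5$, since $r - s \equiv \pm 1 \pmod 5$ if and only if $2r - 2s \equiv \pm 2 \pmod 5$.

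Finally, $S \neq S'$ (for instance $1 \in S \setminus S'$) and $-S = S$, so neither element of $\Aut(\Z)$ sends $S$ to $S'$. Thus $\Cay(\Z, S)$ is not a CI-graph, and $\Z$ is not a (D)CI-group. There is no serious obstacle here; the whole point is that $\Z$ has very few group automorphisms while a finite quotient $\Z_n$ may have many, so pulling back a graph isomorphism through such a quotient provides the counterexample almost for free. The only care needed is checking that the blown-up graphs really are isomorphic for the two representations, which is immediate once both are described as blow-ups of $C_5$.
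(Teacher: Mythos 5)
Your proposal is correct and is essentially identical to the paper's proof: the sets $S=\{x\equiv\pm1\pmod 5\}$ and $S'=\{x\equiv\pm2\pmod 5\}$ are exactly those used in the paper, and your map $\phi(5k+r)=5k+(2r\bmod 5)$ is literally the same isomorphism the paper writes out case by case on residue classes. Your ``blow-up of $C_5$'' framing is a pleasant conceptual gloss on the same verification.
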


\begin{proof}
We prove this by finding a Cayley graph on $\Z$ that is not a CI-graph.  Let $S=\{i \in \Z: i \equiv 1,4 \pmod{5}\}$. We will show that $\Gamma=\Cay(\Z;S)$ is not a CI-graph.

Let $S'=\{i \in \Z: i \equiv 2,3 \pmod{5}\}$, and let $\Gamma=\Cay(\Z;S')$.  We claim that if we define $\phi:\Gamma \to\Gamma'$ by $$\phi(i)=\begin{cases}i \text{ if $i \equiv 0\pmod{5}$}\\ i+1 \text{ if $i \equiv 1 \pmod{5}$}\\ i+2 \text{ if $i \equiv 2 \pmod{5}$}\\ i-2 \text{ if $i \equiv 3 \pmod{5}$}\\  i-1 \text{ if $i \equiv 4\pmod{5}$}\end{cases},$$ then $\phi$ is a graph isomorphism.  Clearly $\phi$ is one-to-one and onto, so we need only show that $xy$ is an edge of $\Gamma$ if and only if $\phi(x)\phi(y)$ is an edge of $\Gamma'$.

Suppose that $xy$ is an edge of $\Gamma$; equivalently, $y-x \equiv 1, 4\pmod{5}$.  A case-by-case analysis of the possible residue classes for $x$ and $y$ shows that this always forces $\phi(y)-\phi(x) \equiv 2,3 \pmod{5}$; equivalently, $\phi(x)\phi(y)$ is an edge of $\Gamma'$.

Since the only automorphisms of $\Z$ fix sets that are closed under taking negatives (which $S$ and $S'$ are), and $S\neq S'$, we conclude that $\Gamma$ is not a CI-graph.
\end{proof}

This of course has very strong consequences.

\begin{cor}\label{non-torsion-not-CI}
No infinite group containing an element of infinite order is a CI-group.  That is, infinite CI-groups must be torsion groups.
\end{cor}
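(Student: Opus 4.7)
The plan is to combine the two previous results in the excerpt in the most direct way. Suppose $G$ is an infinite group containing an element $g$ of infinite order. Then the cyclic subgroup $\langle g \rangle \leq G$ is isomorphic to $\mathbb{Z}$. By Proposition \ref{Z-not-CI}, $\mathbb{Z}$ is not a (D)CI-group, so $\langle g \rangle$ is a subgroup of $G$ that is not a (D)CI-group.

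Next, I would invoke Remark \ref{CI-hereditary}: the property of being a CI-group is inherited by subgroups. Contrapositively, if $G$ has any subgroup that fails to be a CI-group, then $G$ itself cannot be a CI-group. Applying this to $\langle g \rangle$ gives the conclusion that $G$ is not a CI-group. Since every DCI-group is a CI-group, the same conclusion holds in the directed case.

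There is essentially no obstacle here: the only thing to be careful about is that Remark \ref{CI-hereditary} was justified in the excerpt by a construction that uses the complement of $\Cay(H;S)$ to guarantee connectedness, and that construction explicitly does not require local-finiteness. Since the Cayley graph on $\mathbb{Z}$ produced in Proposition \ref{Z-not-CI} has an infinite connection set, and we are in the general CI (not CI$_f$) setting, the hereditary argument applies without modification. Hence the corollary follows in one line from Proposition \ref{Z-not-CI} and Remark \ref{CI-hereditary}, with the second sentence being a restatement of the contrapositive.
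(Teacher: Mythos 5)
Your proof is correct and is essentially identical to the paper's: both take the infinite-order element $g$, note $\langle g\rangle\cong\Z$, apply Proposition \ref{Z-not-CI}, and conclude via the subgroup-hereditary property of Remark \ref{CI-hereditary}. Your extra remark that the hereditary construction does not require local-finiteness is a correct and sensible check, though the paper does not spell it out at this point.
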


\begin{proof}
If $G$ contains an element $\tau$ of infinite order, then $\langle \tau\rangle \cong \Z$.  By Proposition \ref{Z-not-CI}, this subgroup is not a CI-group, and by Remark \ref{CI-hereditary}, $G$ cannot be a CI-group.
\end{proof}

We now consider infinite abelian $p$-groups.

\begin{prop}\label{p-gps}
No infinite abelian $p$-group is a CI-group.
\end{prop}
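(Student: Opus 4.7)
The plan is to locate, inside any infinite abelian $p$-group $G$, a finite subgroup already known not to be a CI-group, and then invoke Remark \ref{CI-hereditary}. This reduces the problem to the extensively studied finite theory of CI abelian $p$-groups.

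I would split into two cases according to whether the exponent of $G$ is bounded. If $G$ has unbounded exponent, then for each $n$ the group $G$ contains an element of order $p^n$, so $\Z/p^n\Z \leq G$ for arbitrarily large $n$. Muzychuk's classification of CI cyclic groups implies that $\Z/p^n\Z$ fails to be a CI-group once $n$ is sufficiently large: for any fixed prime $p$, only finitely many $p^n$ yield CI cyclic groups (concretely, $\Z/p^n\Z$ is never CI when $n\ge 4$). Fixing such an $n$ and invoking Remark \ref{CI-hereditary} dispatches this case.

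If $G$ has bounded exponent, then Pr\"ufer's theorem on bounded abelian groups expresses $G$ as a direct sum of cyclic $p$-groups, each of order at most $p^e$ for some fixed $e$. Because $G$ is infinite but each summand has bounded order, the index set of the decomposition must be infinite. Hence $G$ contains $\bigoplus_{i=1}^\infty \Z/p\Z$ (most transparently inside the socle $G[p]$), and in particular $(\Z/p\Z)^n$ is a subgroup of $G$ for every $n$. Since $(\Z/p\Z)^n$ is known to fail the CI property for $n$ large enough (Nowitz's example $(\Z/2\Z)^6$ for $p=2$, together with analogous constructions known for odd primes), a second application of Remark \ref{CI-hereditary} completes this case.

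The main obstacle is that both cases rest on nontrivial finite results --- Muzychuk's cyclic classification on one side and Nowitz-style elementary abelian examples on the other. A direct construction in the spirit of Proposition \ref{Z-not-CI} might be possible, but producing one that works uniformly for all infinite abelian $p$-groups (in particular for Pr\"ufer groups, which have few available "coordinate" structures to mimic the residue-class construction on $\Z$) appears noticeably harder than routing everything through the finite classification via hereditariness.
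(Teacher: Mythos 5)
Your proof is correct and follows essentially the same route as the paper: rule out large cyclic $p$-subgroups via the known non-CI cyclic $p$-groups (forcing bounded exponent), then extract a large elementary abelian subgroup and invoke the known failure of CI for elementary abelian $p$-groups of high rank, with Remark \ref{CI-hereditary} doing the transfer in both cases. The only difference is that you make explicit the Pr\"ufer decomposition step that the paper compresses into the parenthetical ``or contain an infinite elementary abelian subgroup,'' and you cite slightly different sources for the same finite non-CI facts.
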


\begin{proof}
By Remark \ref{CI-hereditary}, any subgroup of a CI-group is a CI-group.  By Corollary \ref{non-torsion-not-CI}, any infinite CI-group must be a torsion group (i.e., every element has finite order).
Elspas and Turner \cite{Elsp-Turn} showed that $\Z_{16}$ is not a CI-group, and this was generalised in \cite{Baba-Fran-1} to $\Z_{n^2}$ for $n \ge 4$, so any infinite abelian $p$-group would have to be elementary abelian (or contain an infinite elementary abelian subgroup).  But Muzychuk \cite{Muzy-elem-abel} showed that elementary abelian $p$-groups of sufficiently high rank are not CI-groups. (Muzychuk's rank requirement was later improved by Spiga \cite{Spig-CI} and Somlai \cite{Somlai-CI}, but we only require a finite bound.)
\end{proof}

The following simple lemma will allow us to eliminate all infinite abelian groups. This idea has been used in the finite case, but we provide the proof here since it is short, to show that it works equally well in the infinite case.

\begin{lem}\label{iso-subgps}
Suppose that $G$ is a CI-group.  If $H_1, H_2  \le G$ with $|H_1|=|H_2|$ and $|G:H_1|=|G:H_2|$, then some automorphism of $G$ carries $H_1$ to $H_2$.  In particular, $H_1 \cong H_2$.
\end{lem}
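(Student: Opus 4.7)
The plan is to pick a Cayley graph whose structure encodes the subgroup $H_1$ in a clean, recoverable way, and then use the CI-property to transfer to $H_2$.

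Specifically, I would take $S_1 = H_1 \setminus \{e\}$ and consider $\Gamma_1 = \Cay(G; S_1)$. Because $S_1$ is closed under inverses (being a subgroup minus the identity) and because $g + S_1 = (g + H_1) \setminus \{g\}$, the connected components of $\Gamma_1$ are precisely the left cosets of $H_1$, with each coset spanning a complete (di)graph on $|H_1|$ vertices. Thus $\Gamma_1$ is a disjoint union of $|G:H_1|$ copies of $K_{|H_1|}$. Doing the same with $S_2 = H_2 \setminus \{e\}$ gives $\Gamma_2$, a disjoint union of $|G:H_2|$ copies of $K_{|H_2|}$. Since $|H_1|=|H_2|$ and $|G:H_1|=|G:H_2|$, any bijection between the component vertex sets that matches cosets gives a graph isomorphism $\Gamma_1 \cong \Gamma_2$ (this works for infinite cardinalities just as well as finite).

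Now the CI hypothesis supplies an automorphism $\alpha \in \Aut(G)$ with $\alpha(S_1) = S_2$. Since any group automorphism sends $e$ to $e$, we get $\alpha(H_1) = \alpha(S_1 \cup \{e\}) = S_2 \cup \{e\} = H_2$, which is the desired conclusion; the final assertion $H_1 \cong H_2$ follows because $\alpha|_{H_1}$ is a group isomorphism onto $H_2$.

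There is no real obstacle here: the only thing to be careful about is that the two Cayley graphs really are isomorphic in the infinite setting, but this reduces to checking that a disjoint union of $\kappa$ copies of $K_\lambda$ is determined up to isomorphism by the cardinals $\kappa$ and $\lambda$, which is immediate. It is also worth noting that the argument applies verbatim in the directed case, since $H_i \setminus \{e\}$ is inverse-closed, so the lemma holds whether $G$ is assumed to be a CI-group or a DCI-group.
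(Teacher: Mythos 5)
Your proposal is correct and follows essentially the same route as the paper: form the Cayley graphs on the connection sets $H_i\setminus\{e\}$, observe that each is a disjoint union of $|G:H_i|$ copies of the complete graph on $|H_1|$ vertices so that the two are isomorphic, and then invoke the CI-property to obtain an automorphism carrying $H_1\setminus\{e\}$ to $H_2\setminus\{e\}$, hence $H_1$ to $H_2$. The extra details you supply (inverse-closedness, the cardinality argument for the isomorphism, and adding back the identity) are all sound and merely make explicit what the paper leaves implicit.
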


\begin{proof}
We have $\Cay(G;H_1-\{e\}) \cong \Cay(G;H_2-\{e\})$ since both consist of $|G:H_1|$ disjoint copies of the complete graph on $|H_1|$ vertices.  So there is an automorphism of $G$ that carries $H_1$ to $H_2$.
\end{proof}

Using the above results, we can now show that no infinite abelian group is a CI-group. In fact the idea of this proof does not really require the assumption that the infinite group is abelian, but that is certainly more than sufficient, and results in the strong corollary that follows.

\begin{thm}\label{abelian-not-CI}
No infinite abelian group is a CI-group.
\end{thm}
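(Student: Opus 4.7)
The plan is to assume for contradiction that $G$ is an infinite abelian CI-group and then exhibit two non-isomorphic subgroups of $G$ with the same cardinality and the same index in $G$; this will contradict Lemma \ref{iso-subgps}. By Corollary \ref{non-torsion-not-CI}, I may assume $G$ is torsion, so $G$ decomposes as $G = \bigoplus_p G_p$ into its $p$-primary components.

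If some $G_p$ is infinite, then $G_p \le G$ is an infinite abelian $p$-group, so by Remark \ref{CI-hereditary} it would itself have to be a CI-group, contradicting Proposition \ref{p-gps}. So I may further assume that every $G_p$ is finite, in which case the fact that $G$ is infinite forces the set $P$ of primes with $G_p \ne 0$ to be infinite. For each $p \in P$ I pick $x_p \in G_p$ of order $p$, so that $\bigoplus_{p\in P}\langle x_p\rangle \cong \bigoplus_{p\in P}\mathbb{Z}_p$ sits inside $G$. I now partition $P$ into two infinite subsets $P_1, P_2$ and put $H_i = \bigoplus_{p \in P_i}\langle x_p\rangle \le G$. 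Then $H_1$ has an element of order $p$ for every $p \in P_1$ and no elements of order $q$ for $q \in P_2$, while $H_2$ is the reverse, so $H_1 \not\cong H_2$. Both $H_i$ are countably infinite, and since $G/H_i$ contains the countably infinite group $\bigoplus_{p \in P \setminus P_i}\mathbb{Z}_p$, we have $[G:H_i] \ge \aleph_0$; combining this with the cardinal identity $|G| = |H_i|\cdot [G:H_i]$ and the fact that $|H_1| = |H_2|$ then yields $[G:H_1] = [G:H_2]$, producing the desired contradiction to Lemma \ref{iso-subgps}.

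The one place that calls for care is the cardinality calculation, namely confirming that $[G:H_1]$ and $[G:H_2]$ agree whether $G$ itself is countable or uncountable; this is a short piece of cardinal bookkeeping rather than a real obstacle, since in the countable case both indices equal $\aleph_0$ and in the uncountable case both equal $|G|$. Otherwise the argument is essentially a clean assembly of the machinery already built up in this section, and, as the authorial aside suggests, abelianness is used only to guarantee the primary decomposition.
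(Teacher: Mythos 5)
Your proof is correct and follows essentially the same route as the paper: reduce to a torsion group whose primary components are all finite via Corollary \ref{non-torsion-not-CI}, Proposition \ref{p-gps} and Remark \ref{CI-hereditary}, then contradict Lemma \ref{iso-subgps} with two non-isomorphic subgroups of equal cardinality and equal index, distinguished by the primes dividing the orders of their elements. The only cosmetic difference is that the paper nests its two subgroups ($H_2 = \langle H_1, x\rangle$ with $x$ of order $p$ and $H_1$ free of $p$-torsion), which makes the equality of cardinalities and indices immediate, whereas your disjointly-supported $H_1$, $H_2$ require the short cardinal computation you correctly supply.
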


\begin{proof}
Suppose that $G$ were an infinite abelian CI-group.
By Corollary \ref{Z-not-CI}, we can assume that every element of $G$ has finite order.  By Proposition \ref{p-gps} (and Remark \ref{CI-hereditary}), we can assume that $G$ does not contain an infinite $p$-group (applying Proposition~\ref{p-gps} requires the assumption that $G$ is abelian).  Thus every $p$-subgroup of $G$ is a finite CI-group, and there are nontrivial $p$-subgroups of $G$ for infinitely many primes.  Fix some prime $p$ for which the $p$-subgroups of $G$ are nontrivial.  Let $H_1$ be any infinite subgroup of $G$ that has infinite order and infinite index in $G$, and has no elements of order $p$. (Such an $H_1$ exists since the Sylow $p$-subgroup of $G$ is finite.  For example, if $P_1, P_2, \ldots$ are all of the nontrivial Sylow subgroups of $G$ with the exception of the Sylow $p$-subgroup, we could take $\langle P_i: i \text{ is odd}\rangle$.)  Let $H_2$ be generated by $H_1$ together with an element of order $p$ from $G$.  Clearly, $H_1$ and $H_2$ are non-isomorphic since only one contains an element of order $p$, but this contradicts Lemma \ref{iso-subgps}.
\end{proof}

A locally-finite group is a group in which every finitely-generated subgroup is finite.  The preceding theorem has the following consequence.

\begin{cor}\label{locally-finite}
No infinite locally-finite group is a CI-group.
\end{cor}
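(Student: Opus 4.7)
The plan is to reduce the statement to Theorem \ref{abelian-not-CI} by locating an infinite abelian subgroup inside any infinite locally-finite group. The external result I would invoke is the classical theorem of Hall and Kulatilaka (1964), proved independently by Kargapolov: \emph{every infinite locally-finite group contains an infinite abelian subgroup}.

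Granted that citation, the argument is essentially one line. Let $G$ be an infinite locally-finite group, and suppose for contradiction that $G$ is a CI-group. By the Hall--Kulatilaka--Kargapolov theorem, $G$ contains an infinite abelian subgroup $A$. By Remark \ref{CI-hereditary}, the CI property passes to subgroups, so $A$ is itself a CI-group. But Theorem \ref{abelian-not-CI} forbids this, giving a contradiction.

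The only substantive step is the appeal to Hall--Kulatilaka; everything else has already been proved in the preceding pages of the paper, so I would expect the whole proof to occupy just a few lines. If one wanted to avoid citing an external result and keep the treatment self-contained, a natural alternative would be to use Lemma \ref{iso-subgps} directly: in an infinite locally-finite CI-group, any two finite subgroups of equal order would have equal (infinite) index, so by the lemma they would be abstractly isomorphic. The obstacle along this second route is producing, inside an arbitrary infinite locally-finite group, a pair of non-isomorphic finite subgroups of equal order. This turns out to be delicate for groups such as infinite direct powers of a fixed finite simple group, or Pr\"ufer-type $p$-groups, where uniqueness phenomena can constrain the available finite subgroups; Hall--Kulatilaka sidesteps this combinatorial awkwardness entirely, which is why I would take that route.
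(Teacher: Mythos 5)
Your proof is correct and is exactly the argument given in the paper: both cite the Hall--Kulatilaka/Kargapolov theorem to extract an infinite abelian subgroup, then combine Remark \ref{CI-hereditary} with Theorem \ref{abelian-not-CI}. Nothing further is needed.
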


\begin{proof}
Hall and Kulatilaka \cite{Hall-Kula} and Kargapolov \cite{Karg} independently proved that every infinite locally-finite group contains an infinite abelian group.  Both proofs rely on the Feit-Thompson Theorem.  Together with Remark \ref{CI-hereditary}, Theorem \ref{abelian-not-CI} therefore yields the desired conclusion.
\end{proof}

Given the above results, it would be tempting to conjecture that no infinite group is a CI-group, but this is by no means clear, particularly in the case of unusual groups such as the Tarski Monsters (see below).  We leave this as a problem for future research, first summarising what we can say about such a group.

\begin{cor}
Every subgroup of a CI-group must be a CI-group.
Furthermore, every infinite CI-group must be:
\begin{enumerate}
\item a torsion group; and
\item not locally-finite.
\end{enumerate}
In addition, if there is an infinite CI-group, there is one that is finitely generated.
\end{cor}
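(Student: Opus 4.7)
The plan is to assemble this corollary directly from the results already proved in the section, with only the final clause requiring any genuine argument. The first sentence, that every subgroup of a CI-group is a CI-group, is precisely the content of Remark \ref{CI-hereditary}, so nothing new is needed there. Condition (1), that every infinite CI-group is a torsion group, is exactly Corollary \ref{non-torsion-not-CI}, and condition (2), that no infinite CI-group is locally-finite, is exactly Corollary \ref{locally-finite}. So the only real work is in the last sentence.

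For the final clause, I would argue as follows. Suppose $G$ is an infinite CI-group. By condition (2) just recorded, $G$ is not locally-finite, which by definition means that $G$ contains a finitely-generated subgroup $H$ that is infinite. By Remark \ref{CI-hereditary} (equivalently, the first sentence of the corollary), $H$ is itself a CI-group. Thus $H$ is a finitely-generated infinite CI-group, proving the claim.

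There is essentially no obstacle here: the corollary is a packaging of the section's theorems, with the finite-generation statement reduced to a one-line application of the definition of locally-finite to Corollary \ref{locally-finite}. The only thing worth being careful about is to observe that the chain of implications \emph{infinite CI-group} $\Rightarrow$ \emph{not locally-finite} $\Rightarrow$ \emph{has an infinite finitely-generated subgroup} $\Rightarrow$ \emph{has an infinite finitely-generated CI-subgroup} does not require anything beyond what is already stated, so no additional lemmas need to be introduced.
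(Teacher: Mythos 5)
Your proposal is correct and follows exactly the same route as the paper: the first three claims are cited from Remark \ref{CI-hereditary}, Corollary \ref{non-torsion-not-CI}, and Corollary \ref{locally-finite}, and the final clause is obtained by extracting an infinite finitely-generated subgroup from the failure of local finiteness and applying the hereditary property. No differences worth noting.
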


\begin{proof}
The first statement is Remark \ref{CI-hereditary}.  Conclusion (1) is Corollary \ref{non-torsion-not-CI}.  Conclusion (2) is Corollary \ref{locally-finite}.  

Suppose now that $G$ is an infinite CI-group.  Since $G$ is not locally-finite, it must have a subgroup that is finitely generated but infinite, and is still a CI-group (by Remark \ref{CI-hereditary}).  
\end{proof}

In determining whether or not there is an infinite CI-group, one possible family of candidates that needs to be considered carefully is the family of so-called ``Tarski Monsters".  These are infinite groups whose only proper subgroups have order $p$ for some fixed (but dependent upon the group) large prime $p$.  Thus, every element of the group has order $p$, while any two elements in different cyclic subgroups generate the entire group.  Clearly, if such a group were to be a CI-group, then every non-identity element would have to lie in a single automorphism class (otherwise, if there is no automorphism taking $g$ to $h$ in the Tarski monster $G$, then $\Cay(G;\{g, g^{-1}\}) \cong \Cay(G;\{h, h^{-1}\})$ but there is no automorphism of $G$ taking $\{g, g^{-1}\}$ to $\{h, h^{-1}\}$).  We found discussions on the internet \cite{discussion} indicating that for some Tarski monsters, any two of the subgroups are conjugate, but did not find an answer as to whether or not the stronger condition we are interested in is true for some Tarski monsters.  Even if it were true, this is not enough to guarantee that such a group is a CI-group.  We leave this as an open question.

\begin{question}
Does there exist an infinite CI-group?  In particular, is any Tarski monster a CI-group?
\end{question}

\section{Strongly CI$_f$-groups}

In contrast to the class of CI-groups, we were able to find groups that are strongly CI$_f$-groups. To begin this section, we note that Ryabchenko \cite{Ryab} proved that $\Z$ is a strongly CI$_f$-group.  This result is stated in Section \ref{sect-Ry} of this paper, as Corollary \ref{Z-is-str-CI_f}.  

This naturally leads to the question of $\Z^n$.  We show that $\Z^n$ is not a strongly CI$_f$-group for any $n>1$.  Because we actually plan to give a precise characterisation of the finitely-generated (D)CI-graphs on $\Z^n$, we in fact prove a stronger result.

\begin{prop}\label{non-CI-on-Zn}
Let $n>1$, and let $\Gamma=\Cay(\Z^n;S)$ be any Cayley (di)graph on $\Z^n$ such that the number of connected components of (the underlying graph of) $\Gamma$ is either infinite, or is divisible by $p^2$ for some prime $p$.  Then $\Gamma$ is not a (D)CI-graph.
\end{prop}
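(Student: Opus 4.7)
Let $H=\langle S\rangle\le\Z^n$, so that the underlying graph of $\Gamma$ is the vertex-disjoint union of $m=[\Z^n:H]$ copies of $\Cay(H;S)$, one on each coset of $H$. (The edgeless case $S=\emptyset$ is trivially CI, so I may assume $S$ contains a nonzero element, forcing $k:=\mathrm{rank}(H)\ge 1$.) The plan is to produce a different Cayley (di)graph $\Gamma'=\Cay(\Z^n;S')$ that is abstractly isomorphic to $\Gamma$, yet for which no automorphism of $\Z^n$ carries $S$ to $S'$. I will achieve this by finding a subgroup $H'\le\Z^n$ satisfying (i)~$H'\cong H$ as abstract groups, (ii)~$[\Z^n:H']=m$, and (iii)~$\Z^n/H\not\cong\Z^n/H'$ as abstract groups. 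Given such an $H'$, any group isomorphism $\alpha\colon H\to H'$ yields a (di)graph isomorphism $\Cay(H;S)\to\Cay(H';\alpha(S))$; setting $S'=\alpha(S)$, both $\Gamma$ and $\Gamma'$ become disjoint unions of $m$ copies of the same (di)graph, so $\Gamma\cong\Gamma'$. On the other hand, any automorphism of $\Z^n$ sending $S$ to $S'$ would restrict to an isomorphism $H=\langle S\rangle\to H'=\langle S'\rangle$, inducing an isomorphism $\Z^n/H\to\Z^n/H'$ and contradicting (iii).

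The real content is then the construction of $H'$, which is essentially an abelian-group lemma. Conditions (i) and (ii) together force $\mathrm{rank}(H')=k$. By Smith normal form, the isomorphism class of $\Z^n/H$ is determined by its free rank $n-k$ together with its torsion $T_H\cong\Z/d_1\oplus\cdots\oplus\Z/d_k$ (with $d_1\mid d_2\mid\cdots\mid d_k$); conversely, every finite abelian group with at most $k$ invariant factors arises as the torsion part of $\Z^n/H'$ for some rank-$k$ subgroup $H'\le\Z^n$ (realized explicitly as $H'=d'_1\Z\oplus\cdots\oplus d'_k\Z\oplus 0\oplus\cdots\oplus 0$). So the problem reduces to exhibiting a finite abelian group $T'\not\cong T_H$ with at most $k$ invariant factors; in the finite-index case one must additionally ensure $|T'|=|T_H|=m$.

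In the infinite-index case ($k<n$), the order of $T'$ is unconstrained since both quotients are countably infinite regardless of torsion, so one simply takes $T'=0$ if $T_H$ is nontrivial, or $T'=\Z/2$ if $T_H=0$. In the finite-index case ($k=n$), the hypothesis $p^2\mid m$ enters: the $p$-primary part $P$ of $T_H$ has order at least $p^2$, so one of two modifications is always available. If $P$ contains a cyclic summand $\Z/p^a$ with $a\ge 2$, split it into $\Z/p\oplus\Z/p^{a-1}$; otherwise $P\cong(\Z/p)^s$ with $s\ge 2$, and two copies of $\Z/p$ can be amalgamated into $\Z/p^2$. Either operation produces a $T'$ of the same order but different isomorphism type. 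The main obstacle I anticipate is verifying that the total number of invariant factors of the resulting $T'$ remains at most $n$; this should follow from a short case analysis exploiting the fact that the merging option is always available whenever the splitting option might push the summand count past $n$ (precisely because, in the problematic case where the $p$-summand count already equals $n$, necessarily $s\ge 2$, so merging applies).
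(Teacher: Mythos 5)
Your proposal is correct, but it takes a genuinely different route from the paper. The paper invokes Babai's criterion (Theorem \ref{CI-alt}): writing $\Gamma$ as $N$ disjoint copies of the connected graph $\Cay(\langle S\rangle;S)$, so that $\Aut(\Gamma)$ is a wreath product over the system of components, it observes that the group induced on the set of components by a regular copy of $\Z^n$ is a regular abelian group of degree $N$, and that two abstractly non-isomorphic such groups (e.g.\ $\Z_p\times\Z_{N/p}$ versus $\Z_{p^2}\times\Z_{N/p^2}$ when $p^2\mid N$, or a torsion-free versus a non-torsion-free countable group in the infinite case) both lift to regular copies of $\Z^n$ in $\Aut(\Gamma)$, which therefore cannot be conjugate. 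You instead argue directly from the definition: you isolate the same invariant --- the isomorphism type of $\Z^n/\langle S\rangle$, which is precisely that induced group on components --- and use Smith normal form to manufacture a subgroup $H'\cong\langle S\rangle$ of the same index with a non-isomorphic quotient, transporting $S$ to $S'$ along an abstract isomorphism. Your version is more elementary and more explicit (it hands the reader the offending connection set $S'$ and needs neither the wreath-product description of $\Aut(\Gamma)$ nor Theorem \ref{CI-alt}), at the cost of some invariant-factor bookkeeping; the paper's version is shorter but leaves the ``expansion'' of a component group to a regular $\Z^n$ implicit, and that expansion is essentially the same Smith-normal-form computation you perform. The one loose end you flag closes cleanly if you organise the dichotomy the other way around: whenever the $p$-primary part of $\Z^n/\langle S\rangle$ has at least two cyclic summands, merge two of them into one (this strictly decreases the number of $p$-summands, so never pushes the invariant-factor count past $n$, and always changes the isomorphism type); only when the $p$-part is cyclic --- necessarily of order at least $p^2$ by hypothesis --- do you split, and the result then has exactly two $p$-summands, which is at most $n$. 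Your exclusion of $S=\emptyset$ is also the right call: the edgeless graph is trivially (D)CI, so the proposition implicitly assumes $S\neq\emptyset$, as does the paper's own proof.
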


\begin{proof}
For this proof, we use the formulation of the CI problem given in Theorem \ref{CI-alt}.

Let $G=\langle S \rangle$, and let $\Gamma_0=\Cay(G;S)$ (so this is connected).  Then $\Aut(\Gamma)$ will either be $S_{\Z} \wr \Aut(\Gamma_0)$, or $S_n \wr \Aut(\Gamma_0)$, where $n$ is finite and there is some prime $p$ such that $p^2 \mid n$.  Consider the subgroup of the appropriate symmetric group that is induced by the natural action of $\Z^n$ on the connected components of $\Gamma$.  Clearly this will be a regular abelian subgroup that is either countably infinite, or of order $n$. There are many nonisomorphic countably infinite regular abelian subgroups of $S_{\Z}$ ($\Z$ and $\Z_2 \times \Z$, for example).  Likewise, there are at least two nonisomorphic regular subgroups of $S_n$ ($\Z_p \times \Z_{n/p}$ and $\Z_{p^2} \times \Z_{n/p^2}$).  Since $n>1$, each of these can be expanded to a regular action isomorphic to $\Z^n$ in $\Aut(\Gamma)$.  Since the subgroups are nonisomorphic, they are not conjugate in the appropriate symmetric group, so the expanded actions on $\Gamma$ are not conjugate in $\Aut(\Gamma)$.  Thus $\Gamma$ is not a (D)CI-graph.
\end{proof}

\begin{cor}\label{Zn-not-str-CI}
The group $\Z^n$ is not a strongly CI$_f$-group for $n >1$.
\end{cor}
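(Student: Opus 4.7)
The plan is to apply Proposition \ref{non-CI-on-Zn} directly. To show that $\Z^n$ (with $n>1$) fails to be a strongly CI$_f$-group, I need only exhibit one locally-finite Cayley (di)graph on $\Z^n$ that is not a (D)CI-graph. By the preceding proposition, it suffices to produce a finite connection set $S \subseteq \Z^n$ such that $\Cay(\Z^n; S)$ has either infinitely many connected components, or a number of components divisible by $p^2$ for some prime $p$.

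The simplest candidate is to take $S$ to generate a proper subgroup of infinite index. For instance, let $e_1, \ldots, e_n$ denote the standard basis of $\Z^n$, and set $S = \{\pm e_1\}$ (or $S = \{e_1\}$ in the directed case). Then $S$ is finite, so $\Cay(\Z^n;S)$ is locally-finite, and $\langle S\rangle = \Z e_1$ has infinite index in $\Z^n$ since $n>1$. Consequently the Cayley graph decomposes as a disjoint union of cosets of $\langle S\rangle$, each carrying a copy of the bi-infinite path $\Cay(\Z;\{\pm 1\})$, and the number of connected components equals $|\Z^n:\Z e_1|$, which is countably infinite.

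With this in hand, the hypotheses of Proposition \ref{non-CI-on-Zn} are satisfied (the ``infinite'' alternative), and the proposition delivers the conclusion that $\Cay(\Z^n;S)$ is not a (D)CI-graph. Since this is a locally-finite Cayley (di)graph on $\Z^n$ that fails the CI property, $\Z^n$ is not a strongly (D)CI$_f$-group, as required. There is no real obstacle here: the work has already been done in Proposition \ref{non-CI-on-Zn}, and this corollary is essentially a one-line specialisation obtained by choosing any finite $S$ that fails to generate $\Z^n$.
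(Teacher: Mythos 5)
Your proposal is correct and matches the paper's own proof essentially exactly: the paper's first example is precisely $\Gamma_1=\Cay(\Z^n;\{\pm(1,0,\ldots,0)\})$, which has countably infinitely many components, and the conclusion follows from Proposition~\ref{non-CI-on-Zn}. The paper additionally records a second example with $p^2$ components (and an alternative direct argument), but these are not needed for the corollary itself.
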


\begin{proof}
When $n>1$, it is easy to construct finitely-generated Cayley graphs on $\Z^n$ for which the number of connected components is either countably infinite, or divisible by a square.  For example, $\Gamma_1=\Cay(\Z^n;\{\pm (1,0,\ldots, 0)\})$ has a countably infinite number of connected components, while $\Gamma_2$, the Cayley graph on $\Z^n$ whose connection set is the standard generating set for $\Z^n$ (together with inverses) with the first generator (and its inverse) replaced by $\pm (p^2,0, \ldots, 0)$, will have $p^2$ connected components.  So Proposition \ref{non-CI-on-Zn} is sufficient.

Had we only wanted to show that $\Z^n$ is not a strongly CI$_f$-group for $n>1$, we could have pointed out that $\Gamma_1 \cong \Cay(\Z^n;\{\pm(2,0,\ldots,0)\})$ but not via a group automorphism of $\Z^n$, or similarly that $\Gamma_2$ is isomorphic to the Cayley graph on $\Z^n$ whose connection set is the standard generating set for $\Z^n$ (together with inverses) with the first generator (and its inverse) replaced by $\pm(p,0,\ldots,0)$, and the second generator (and its inverse) replaced by $\pm (0,p,\ldots, 0)$, but not via a group automorphism of $\Z^n$.
\end{proof}

Having determined the status of free abelian groups, we turn our attention to the opposite end of the spectrum of infinite abelian groups and consider torsion groups.  First we prove a restriction on torsion groups that are strongly CI$_f$-groups (dropping the abelian constraint for the time being).

\begin{lem}\label{torsion-str-cont-finite-CI}
Suppose that $G$ is a locally-finite torsion group that is a strongly CI$_f$-group.  Then
every finite subgroup of $G$ is a CI-group.  

Furthermore, for $p \ge 5$ the Sylow $p$-subgroups of $G$ are elementary abelian, and the Sylow $3$-subgroups are either cyclic of order at most 27, or elementary abelian.
\end{lem}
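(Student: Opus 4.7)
The plan is to establish the first claim by adapting the disjoint-union argument that appears just after Remark~\ref{CI-hereditary} to the locally-finite setting, and then to deduce the $p$-group restrictions by combining this with the known classifications of finite CI $p$-groups cited elsewhere in the paper.

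For the first claim, suppose for contradiction that some finite subgroup $H\le G$ is not a CI-group. After replacing by a complement if necessary, I would choose a connected Cayley graph $\Cay(H;S)$ that is not a CI-graph, along with $S'\subseteq H$ and a graph isomorphism $\Cay(H;S)\to\Cay(H;S')$ that is not induced by any group automorphism of $H$. Since $\Cay(H;S)$ is connected, $\la S\ra=H$, and then $\Cay(H;S')$ is also connected, so $\la S'\ra=H$ as well. Now $\Cay(G;S)$ and $\Cay(G;S')$ are locally finite (since $S$ and $S'$ are finite) and decompose into $|G:H|$ connected components, one per coset of $H$, each isomorphic to the corresponding Cayley graph on $H$; hence $\Cay(G;S)\cong\Cay(G;S')$. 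If, however, some $\alpha\in\Aut(G)$ satisfied $\alpha(S)=S'$, then $\alpha(H)=\alpha(\la S\ra)=\la S'\ra=H$, and $\alpha|_H$ would be a group automorphism of $H$ sending $S$ to $S'$, a contradiction. Thus $\Cay(G;S)$ is a locally-finite Cayley (di)graph on $G$ that is not a (D)CI-graph, contradicting the hypothesis that $G$ is a strongly (D)CI$_f$-group.

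For the Sylow restrictions, let $Q$ be any finite $p$-subgroup of $G$; by the first claim and Remark~\ref{CI-hereditary}, every subgroup of $Q$ is a CI-group. For $p\ge 5$, any element of order $p^2$ in $Q$ would generate a cyclic subgroup isomorphic to $\Z_{p^2}$, which the Babai--Frankl result cited in Proposition~\ref{p-gps} rules out (since $p\ge 4$), so $Q$ has exponent $p$. I would then invoke the known classification fact (from Li's survey \cite{Li-survey}) that no non-abelian finite $p$-group of exponent $p$ with $p\ge 5$ is a CI-group, to conclude $Q$ is abelian and hence elementary abelian; since every finite $p$-subgroup of $G$ has this form, so does any Sylow $p$-subgroup.

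For $p=3$ the same Babai--Frankl result applied with $n=9$ eliminates $\Z_{81}$, so $Q$ has exponent at most $27$. If $Q$ is cyclic this already gives the first alternative; otherwise I would cite the corresponding entries of \cite{Li-survey} ruling out all non-cyclic, non-elementary-abelian finite $3$-groups of exponent at most $27$ as CI-groups, forcing $Q$ to be elementary abelian. Passing to a Sylow $3$-subgroup then gives the stated dichotomy. The main obstacle is not the internal bookkeeping, which is essentially a careful repetition of the argument after Remark~\ref{CI-hereditary}, but rather the appeal to non-trivial external classification results: ruling out the non-abelian exponent-$p$ $p$-groups for $p\ge 5$ and the non-elementary-abelian non-cyclic $3$-groups of bounded exponent is the substantive input, and everything else in the proof is bookkeeping around it.
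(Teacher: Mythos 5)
Your proof of the first claim is correct and is exactly the argument the paper intends when it says ``an argument similar to that of Remark~\ref{CI-hereditary}'': pass to a connected non-CI Cayley graph on the finite subgroup $H$ (complementing if necessary), observe that the corresponding Cayley graphs on $G$ are locally finite and isomorphic, and note that any $\alpha\in\Aut(G)$ with $\alpha(S)=S'$ would restrict to an automorphism of $H=\la S\ra=\la S'\ra$ carrying $S$ to $S'$. Your route to the structure of the \emph{finite} $p$-subgroups (bound the exponent by excluding $\Z_{p^2}$ and $\Z_{81}$, then quote classification results to exclude the remaining non-elementary-abelian possibilities) also reaches the same place as the paper, which simply cites Babai and Frankl \cite{Baba-Fran-1} for the single statement that the finite $p$-groups that are CI-groups are elementary abelian when $p\ge 5$, and cyclic of order at most $27$ or elementary abelian when $p=3$.

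The genuine gap is your closing inference: ``since every finite $p$-subgroup of $G$ has this form, so does any Sylow $p$-subgroup.'' A Sylow $p$-subgroup of an infinite locally-finite group can be infinite, and a property of all finite subgroups does not automatically globalise: every finite subgroup of the Pr\"ufer group $\Z_{p^\infty}$ is cyclic, yet the group itself is not. This is precisely the point at which the paper brings in Muzychuk's theorem \cite{Muzy-elem-abel} that elementary abelian groups of sufficiently high rank are not CI-groups: combined with the structure of the finite $p$-subgroups, this bounds the order of every finite $p$-subgroup of $G$, so by local finiteness every $p$-subgroup of $G$ is finite, and only then does the classification of finite CI $p$-groups apply to the Sylow subgroups themselves. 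One could instead close the gap with an explicit local-to-global argument (a group all of whose two-generated subgroups are abelian of exponent $p$ is elementary abelian, plus a separate case analysis for the $3$-group dichotomy), but as written the step is asserted rather than proved, and the ingredient the paper uses to justify it --- Muzychuk's rank bound --- is entirely absent from your proposal.
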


\begin{proof}
Since $G$ is a strongly CI$_f$-group, an argument similar to that of Remark \ref{CI-hereditary} shows that every finite subgroup must be a CI-group.

Babai and Frankl \cite{Baba-Fran-1} showed that for $p \ge 5$ the only finite $p$-groups that are CI-groups are elementary abelian, and the finite $3$-groups that are CI-groups are either cyclic of order at most 27, or elementary abelian.  Furthermore, Muzychuk \cite{Muzy-elem-abel} proved that elementary abelian groups of sufficiently high rank are not CI-groups.  Since $G$ is locally-finite and the results just stated imply that every finite $p$-subgroup has bounded order, there must be a finite number of generators that contribute to any $p$-group in $G$.  In particular, this means that the $p$-groups in $G$ must all be finite.  Thus by \cite{Baba-Fran-1} again, we obtain the desired conclusion.
\end{proof}

In addition to the single example of $\Z$, we are able to find an infinite family of groups are strongly CI$_f$-groups.  

\begin{thm}\label{abelian-torsion-str}
Let $G$ be a countable abelian torsion group.  Then $G$ is a strongly (D)CI$_f$-group if and only if every finite subgroup of $G$ is a (D)CI-group.
\end{thm}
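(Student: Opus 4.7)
The plan is to prove both implications. For the forward direction, suppose $G$ is strongly (D)CI$_f$ and $H \le G$ is finite. Given an isomorphism $\phi:\Cay(H;S)\to\Cay(H;S')$, first replace $(S,S')$ by their complements in $H\setminus\{0\}$ if necessary so that $\langle S\rangle = \langle S'\rangle = H$; then extend $\phi$ coset-by-coset (using translates on each coset of $H$ in $G$) to a locally-finite graph isomorphism $\Cay(G;S)\to\Cay(G;S')$. The hypothesis produces $\alpha\in\Aut(G)$ with $\alpha(S)=S'$, and since $\alpha(H)=\alpha(\langle S\rangle)=\langle S'\rangle=H$, the restriction $\alpha|_H$ is the required automorphism of $H$.

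For the reverse direction, suppose every finite subgroup of $G$ is (D)CI and let $\phi:\Cay(G;S)\to\Cay(G;S')$ be an isomorphism with $S$ finite; normalise $\phi(0)=0$ and set $H=\langle S\rangle$, $H'=\langle S'\rangle$, both finite since $G$ is an abelian torsion group. Then $\phi(H)=H'$ (the components through $0$ on each side) and $|H|=|H'|$. The key move is to pass to $\tilde H:=H+H'$, a finite subgroup of $G$ and hence (D)CI by hypothesis: since $\Cay(\tilde H;S)$ and $\Cay(\tilde H;S')$ are disjoint unions of $[\tilde H:H] = [\tilde H:H']$ isomorphic copies of the components $\Cay(H;S)$ and $\Cay(H';S')$ respectively, they are isomorphic, and (D)CIness of $\tilde H$ yields $\beta\in\Aut(\tilde H)$ with $\beta(S)=S'$.

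The main obstacle is extending $\beta$ to an automorphism $\alpha$ of $G$. My plan is to use the primary decomposition $G=\bigoplus_p G_p$, $\tilde H=\bigoplus_p \tilde H_p$: since primary components are characteristic, $\beta=\bigoplus_p\beta_p$ with $\beta_p\in\Aut(\tilde H_p)$, so it suffices to extend each $\beta_p$ to an automorphism of $G_p$. For infinite $G_p$, the hypothesis combined with the Babai--Frankl classification recalled in Lemma~\ref{torsion-str-cont-finite-CI} forces $G_p$ to be elementary abelian: directly for $p\ge 5$; for $p=3$ because any finite subgroup such as $\Z_9\oplus\Z_3$ fails (D)CI (being neither cyclic of order at most $27$ nor elementary abelian), so infinite $G_3$ must be elementary abelian; and an analogous but more delicate case analysis handles $p=2$ using the classification of finite (D)CI abelian $2$-groups. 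Then $\tilde H_p$ is a finite-dimensional subspace of the vector space $G_p$ over the field of $p$ elements, and $\beta_p$ extends by the identity on a chosen complementary subspace. When $G_p$ is finite, a separate argument leveraging the explicit structure of the finite (D)CI $p$-group $G_p$ yields the extension. Assembling, $\alpha=\bigoplus_p\alpha_p$ is the desired automorphism of $G$ with $\alpha(S)=S'$; the most delicate part will be the structural characterisation of $G_p$ for $p=2$, where the classification of finite (D)CI abelian $2$-groups is the most intricate.
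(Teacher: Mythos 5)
Your overall strategy coincides with the paper's: necessity by embedding $\Cay(H;S)$ into $\Cay(G;S)$ as a locally-finite disjoint union of copies (the paper routes this through Lemma~\ref{torsion-str-cont-finite-CI} and Remark~\ref{CI-hereditary}), and sufficiency by passing to the finite subgroup $\langle S,S'\rangle$, which is (D)CI by hypothesis, obtaining an automorphism there, and then extending it to all of $G$. Where you genuinely differ is in how that extension is organised: the paper enumerates $G=\{g_1,g_2,\ldots\}$, sets $G_i=\langle g_1,\ldots,g_i\rangle$, and extends the automorphism one finite step at a time (Sylow subgroup by Sylow subgroup) before taking a direct limit, whereas you extend in a single shot across the primary decomposition $G=\bigoplus_p G_p$, using a vector-space complement in each elementary abelian component. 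Your version is arguably cleaner, but two remarks are in order. First, the case of an infinite $G_p$ is vacuous: by Muzychuk an infinite elementary abelian $p$-group contains finite elementary abelian subgroups of arbitrarily large rank that are not CI-groups, so the hypothesis forces every $G_p$ to be finite (the paper records exactly this in Lemma~\ref{torsion-str-cont-finite-CI}). Second, your ``separate argument'' for finite $G_p$ is precisely where the real content lies and cannot be left implicit: an automorphism of a subgroup of a finite abelian $p$-group need not extend to the whole group (for instance, the automorphism of the socle of $\Z_4\times\Z_2$ swapping $(2,0)$ and $(0,1)$ does not extend, since every automorphism of $\Z_4\times\Z_2$ fixes $(2,0)$), so you must first pin down the isomorphism type of $G_p$ itself. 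The paper does this with the Li--Lu--P\'alfy strengthening of Babai--Frankl: since $G$ is infinite, each $G_p$ is the Sylow $p$-subgroup of finite subgroups of $G$ of order exceeding $288$, all (D)CI by hypothesis, which forces $G_p$ to be either $\Z_4$ or elementary abelian; in both of those cases the extension of $\beta_p$ is immediate. With that ingredient made explicit your argument closes up and is a valid, mildly streamlined variant of the paper's proof.
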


\begin{proof}
Abelian torsion groups are locally-finite, so
necessity is shown in Lemma \ref{torsion-str-cont-finite-CI}.

For the converse, suppose that $G$ is a countable abelian torsion group, and every finite subgroup of $G$ is a (D)CI-group.  

By Lemma \ref{torsion-str-cont-finite-CI}, the Sylow $p$-subgroups of $G$ are elementary abelian, or cyclic of order at most 27, where $p \ge 3$.
Aside from some finite exceptional groups whose order does not exceed $2^5 3^2=288$, it is known that in any finite abelian (D)CI-group $H$, every Sylow $p$-subgroup of $H$ must be either $\Z_4$, or elementary abelian.  This strengthening of the work of Babai and Frankl \cite{Baba-Fran-1} for $p=2$ and $p=3$ is mentioned in \cite{Li-Lu-Palf}.  Since $G$ has arbitrarily large finite subgroups all of which are (D)CI-groups, this implies that every Sylow $p$-subgroup of $G$ must be either $\Z_4$, or elementary abelian.

Let $\Gamma=\Cay(G;S)\cong \Gamma'=\Cay(G;S')$, with $S$ finite.  Since $G$ is an abelian torsion group, $\langle S \rangle$ must be finite, and $\langle S' \rangle$ has the same finite order, so $H=\langle S, S' \rangle$ is a finite subgroup of $G$, so is a (D)CI-group.  Clearly $\Cay(H;S) \cong \Cay(H;S')$, so as $H$ is a (D)CI-group, there is an automorphism $\alpha$ of $H$ taking $S$ to $S'$.  

Since $G$ is countable, list the elements of $G$: $g_1, g_2, \ldots$, so that $H=\{g_1, \ldots, g_{|H|}\}$ (the rest of the list can be arbitrary).  For $i \ge |H|$, define $G_i = \langle g_1, \ldots, g_i\rangle$ (so $G_{|H|}=H$). 

We claim that for $i \ge |H|$, there is an automorphism $\alpha_i$ of $G_i$ that takes $S$ to $S'$ (so is an isomorphism from $\Cay(G_i;S)$ to $\Cay(G_i;S')$) such that for every $j \in \{|H|, |H|+1, \ldots, i\}$, the restriction of $\alpha_i$ to $G_j$ is $\alpha_j$.  We prove this claim by induction.  The base case of $i=|H|$ has been established.  By induction, we can assume that we have $\alpha_{i-1}$ such that the restriction of $\alpha_{i-1}$ to $G_j$ is $\alpha_j$ for every $|H|\le j \le i-1$, so we need only find $\alpha_i$ such that the restriction of $\alpha_i$ to $G_{i-1}$ is $\alpha_{i-1}$.  Since $G_i$ is abelian, it is the direct product of its Sylow $p$-subgroups, so if we show that the action of $\alpha_{i-1}$ on any Sylow $p$-subgroup of $G_{i-1}$ is the restriction of the action of $\alpha_i$ on the corresponding Sylow $p$-subgroup of $G_i$, this will suffice.  Let $P_i$ be a Sylow $p$-subgroup of $G_i$, and $P_{i-1}$ the corresponding Sylow $p$-subgroup of $G_{i-1}$.  If $P_{i-1}=P_i$ then we define $\alpha_i(g)=\alpha_{i-1}(g)$ for every $g \in P_i=P_{i-1}$.  If $P_i$ is elementary abelian and $P_i \neq P_{i-1}$, then since $G_i=\langle G_{i-1},g_i\rangle$ is abelian, we must have $P_i\cong P_{i-1} \times \Z_p$.  In this case use this representation, and for any $(g,h) \in P_i=P_{i-1} \times\Z_p$, define $\alpha_i(g,h)=(\alpha_{i-1}(g),h)$.  The only remaining possibility is that $p=2$, $P_i=\Z_4$,  and $P_{i-1}=\Z_2$.  In this case, define $\alpha_i(g)=g$ for every $g \in P_i$.  Since $\alpha_{i-1}$ must act as the identity on $P_{i-1}\cong \Z_2$, the restriction of $\alpha_i$ to $P_{i-1}$ is again $\alpha_{i-1}$.

Now we define $\alpha'$, which will be an automorphism of $G$ that takes $S$ to $S'$. For ease of notation, first define $\alpha_i =\alpha$ for $1 \le i \le |H|$.  Now for any $g_i \in G$, define $\alpha'(g_i)=\alpha_i(g_i)$.  We show that the map $\alpha'$ is an automorphism of $G$. Let $g_i, g_j \in G$ with $i\le j$. First, notice that because the restriction of $\alpha_j$ to $G_i$ is $\alpha_i$ (where $G_i=H$ for every $1 \le i \le |H|$), we have $\alpha_j(g_i)=\alpha_i(g_i)$. Now, $g_i, g_j, g_ig_j \in G_j$ and $$\alpha'(g_i)\alpha'(g_j)=\alpha_i(g_i)\alpha_j(g_j)=\alpha_j(g_i)\alpha_j(g_j)=\alpha_j(g_ig_j)=\alpha'(g_ig_j).$$ 
\end{proof}

While the finite abelian (D)CI-groups have not been completely determined, elementary abelian groups of rank at most 4 are known to be DCI-groups \cite{Ted-Z_p3,Gods-Z_p2,Hira-Muzy,thesis,Turn-Zp}.  So the preceding theorem gives us an infinite class of infinite strongly (D)CI$_f$-groups: namely, pick any infinite set of primes $Q$.  For each $p \in Q$, take a cyclic $p$-group.  Define $G$ to be the direct product of the chosen groups.  Then $G$ is a strongly (D)CI$_f$-group. (It would be nice to be able to select an elementary abelian $p$-group of rank higher than one for at least some of the primes in $Q$; unfortunately, the question of whether or not finite direct products of most such groups are (D)CI-groups remains open.)

It is, unfortunately, not clear whether the property of being a strongly (D)CI$_f$-group is necessarily inherited by subgroups of strongly (D)CI$_f$-groups.  In the examples that we have found, it is inherited, since the only infinite subgroup of $\Z$ is $\Z$, and if $G$ is any group in the family of strongly (D)CI$_f$-groups described in Theorem \ref{abelian-torsion-str}, and $H$ is any infinite subgroup of $G$, then (by our structural characterisation of the family) $H$ is in the family, so $H$ is a strongly (D)CI$_f$-group.  In general, though, we do not see why the following situation might not arise:  $G$ is a strongly (D)CI$_f$-group, and for some infinite subgroup $H$ and some finite subsets $S, S'$ of $G$, $\Cay(G; S) \cong \Cay(G;S')$, but for every automorphism $\alpha$ of $G$ that takes $S$ to $S'$, we have $\alpha(H) \neq H$, and in fact no automorphism of $H$ takes $S$ to $S'$.  

\begin{question}
Is every subgroup of a strongly (D)CI$_f$-group necessarily a strongly (D)CI$_f$-group?
\end{question}

We can at least say that subgroups of strongly (D)CI$_f$-groups that are finitely-generated are necessarily (D)CI$_f$-groups.  

\begin{prop}\label{str-CI_f-to-CI_f}
Every finitely-generated subgroup of a strongly (D)CI$_f$-group is a (D)CI$_f$-group.
\end{prop}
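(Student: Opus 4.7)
The plan is to mimic the argument of Remark \ref{CI-hereditary}, but adapted to the locally-finite setting: given a candidate Cayley graph on $H$, promote it to a locally-finite Cayley graph on $G$, apply the strong (D)CI$_f$ property of $G$, and then show that the resulting group automorphism of $G$ descends to one of $H$.

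More precisely, let $G$ be a strongly (D)CI$_f$-group and let $H \le G$ be finitely generated. Suppose $\Gamma = \Cay(H;S)$ is a connected locally-finite Cayley (di)graph on $H$ (so $S \subseteq H$ is finite and $\langle S\rangle = H$) and $\Gamma' = \Cay(H;S')$ is isomorphic to $\Gamma$. Since isomorphism preserves connectedness, $\Gamma'$ is also connected, and in particular $\langle S'\rangle = H$ and $|S'| = |S|$ is finite.

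Now consider the Cayley (di)graphs $\Cay(G;S)$ and $\Cay(G;S')$ on $G$. Since $S, S' \subseteq H$ are finite, both are locally finite, and each decomposes as a disjoint union indexed by the right cosets of $H$ in $G$: the component containing $g \in G$ is isomorphic to $\Gamma$ (respectively $\Gamma'$) via the map $h \mapsto gh$. Choosing one isomorphism $\Gamma \to \Gamma'$ and applying it coset-by-coset produces an isomorphism $\Cay(G;S) \to \Cay(G;S')$. Because $G$ is a strongly (D)CI$_f$-group, there is an automorphism $\beta$ of $G$ with $\beta(S) = S'$.

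Finally, because $S$ generates $H$ and $\beta$ is a group automorphism,
\[
\beta(H) = \beta(\langle S\rangle) = \langle \beta(S)\rangle = \langle S'\rangle = H,
\]
so $\beta$ restricts to an automorphism $\alpha = \beta|_H$ of $H$ satisfying $\alpha(S) = S'$. This is exactly what is needed to conclude that $\Gamma$ is a (D)CI-graph, and hence that $H$ is a (D)CI$_f$-group. The only subtle point is the need for $\Gamma'$ to be connected (so that $\langle S'\rangle = H$, ensuring $\beta$ preserves $H$), but this is automatic from the connectedness of $\Gamma$ and the existence of a graph isomorphism $\Gamma \to \Gamma'$; no serious obstacle arises.
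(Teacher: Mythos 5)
Your proof is correct and follows essentially the same route as the paper's: lift $S$ and $S'$ to Cayley (di)graphs on $G$ (disjoint unions of copies of $\Gamma$ and $\Gamma'$ over cosets of $H$), invoke the strong (D)CI$_f$ property to get $\beta\in\Aut(G)$ with $\beta(S)=S'$, and use $\langle S\rangle=\langle S'\rangle=H$ to see that $\beta$ restricts to the required automorphism of $H$. The only difference is that you spell out the coset-by-coset isomorphism explicitly, which the paper leaves as an observation.
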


\begin{proof}
Let $G$ be a strongly (D)CI$_f$-group, and let $H\le G$ be finitely generated.  Suppose that $\langle S \rangle=H$, and $\Cay(H;S) \cong \Cay(H;S')$ for some subset $S'$ of $H$.  Since $\Cay(H;S)$ (or the underlying undirected graph) is connected, we also have $\langle S' \rangle =H$.  Clearly, $\Cay(G;S) \cong \Cay(G;S')$ since each is the disjoint union of $|G:H|$ copies of the original (di)graph.  Since $G$ is a strongly (D)CI$_f$-group, there is an automorphism $\alpha$ of $G$ such that $\alpha(S)=S'$.  Since $H=\langle S \rangle=\langle S' \rangle$, we must have $\alpha(H)=H$, so the restriction of $\alpha$ to $H$ is an automorphism of $H$ that takes $S$ to $S'$.
\end{proof}

\section{CI$_f$-groups}

Although it was not the statement he gave, Ryabchenko \cite{Ryab} proved that $\Z^n$ is a CI$_f$-group for every $n$; that is, every finitely-generated free abelian group is a CI$_f$-group.  We include a slight generalisation of his proof in Section \ref{sect-Ry}, as Corollary \ref{Zn-is-CI_f}.  Currently, these are the only infinite (D)CI$_f$-groups that we know of, since the family of strongly (D)CI$_f$-groups determined in Theorem \ref{abelian-torsion-str} has no finitely-generated members.

An interesting observation is that although connectedness is enough to ensure that a locally-finite Cayley graph on $\Z^n$ is a (D)CI-graph, it is not sufficient if the graph is not locally-finite.

\begin{cor}\label{locally-infinite}
Let $n>1$. Amongst connected Cayley (di)graphs on $\Z^n$ that are not locally finite, some will be (D)CI-graphs and some will not.
\end{cor}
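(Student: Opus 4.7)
The plan is to exhibit two connected, non-locally-finite Cayley (di)graphs on $\Z^n$: one that is a (D)CI-graph and one that is not.

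For the positive side, I would take the complete graph $\Gamma=\Cay(\Z^n;\Z^n\setminus\{0\})$. It is connected with infinite valency, hence not locally finite. It is trivially a (D)CI-graph, since the only $S'\subseteq\Z^n$ for which $\Cay(\Z^n;S')$ is isomorphic (even just as an abstract graph) to the countable complete graph is $S'=\Z^n\setminus\{0\}$, which is fixed by every group automorphism of $\Z^n$.

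For the negative side, I would adapt the construction of Proposition \ref{Z-not-CI} to $\Z^n$. Let $S_1=\{i\in\Z:i\equiv 1,4\pmod{5}\}$ and $S_1'=\{i\in\Z:i\equiv 2,3\pmod{5}\}$, and define
\[S=\{se_1:s\in S_1\}\cup\{\pm e_2,\ldots,\pm e_n\},\qquad S'=\{se_1:s\in S_1'\}\cup\{\pm e_2,\ldots,\pm e_n\}.\]
Both sets are symmetric, infinite, and generate $\Z^n$, so both $\Cay(\Z^n;S)$ and $\Cay(\Z^n;S')$ are connected and not locally finite. Extending the map $\phi$ from Proposition \ref{Z-not-CI} coordinate-wise by $\Phi(a_1,a_2,\ldots,a_n)=(\phi(a_1),a_2,\ldots,a_n)$ gives an isomorphism $\Cay(\Z^n;S)\to\Cay(\Z^n;S')$: on edges in the $e_1$-direction, $\Phi$ acts as $\phi$ and matches $S_1$ to $S_1'$ by the earlier proof; on the remaining edges, the first coordinate is unchanged so $\Phi$ acts as the identity.

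The main obstacle is ruling out any $A\in GL_n(\Z)$ with $A(S)=S'$. My plan: first, observe that the $e_1$-axis is the unique line through the origin that meets $S$ (respectively, $S'$) in infinitely many points, so $A$ must send the $e_1$-axis to itself and thus $A(e_1)=ce_1$ for some $c\in\Z$. Next, for $i\ge 2$, the image $A(e_i)$ cannot also lie on the $e_1$-axis, for otherwise columns $1$ and $i$ of $A$ would both be supported in the first coordinate, forcing $A$ to be singular. Hence $A(e_i)\in\{\pm e_2,\ldots,\pm e_n\}$ for each $i\ge 2$, making $A$ block-diagonal with a $1\times 1$ block $(c)$ and an $(n-1)\times(n-1)$ signed-permutation block. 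Taking $\det A=\pm 1$ gives $c=\pm 1$, so $A$ acts on the $e_1$-axis as $\pm\mathrm{id}$; but since $S_1=-S_1\ne S_1'$, this cannot send $S_1$ to $S_1'$, a contradiction. Therefore $\Cay(\Z^n;S)$ is not a (D)CI-graph.
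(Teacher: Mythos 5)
Your proof is correct, but it takes a genuinely different route from the paper's. The paper derives the corollary in one line from results already in hand, via the complement trick: since a (di)graph is (D)CI if and only if its complement is, the complements of connected locally-finite Cayley (di)graphs (which are (D)CI by Corollary~\ref{Zn-is-CI_f}) and the complements of locally-finite Cayley (di)graphs with an infinite or non-square-free number of components (which are not, by Proposition~\ref{non-CI-on-Zn}) already supply both kinds of examples. You instead build explicit witnesses from scratch: the complete graph for the positive direction (a legitimate though degenerate example, trivially (D)CI because completeness is isomorphism-invariant and forces $S'=\Z^n\setminus\{0\}$), and for the negative direction a new example obtained by crossing the non-CI circulant of Proposition~\ref{Z-not-CI} with the standard generators of $\Z^{n-1}$. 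Your linear-algebra argument is sound: the $e_1$-axis is indeed the unique line through the origin meeting $S$ (and $S'$) in infinitely many points, which forces any candidate $A \in \mathsf{GL}(n,\Z)$ to be block-diagonal and to act as $\pm 1$ on that axis, and then $S_1 = -S_1 \neq S_1'$ finishes the job. What the paper's approach buys is breadth: whole families of examples of each type with no additional work, all of them complements of locally-finite graphs. What yours buys is self-containedness (the positive direction needs none of Ryabchenko's machinery from Theorem~\ref{isos-on-Zn-are-autos}) and a negative example whose complement is \emph{not} locally finite, so it exhibits non-(D)CI behaviour outside the class the paper's proof reaches.
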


\begin{proof}
Corollary~\ref{Zn-is-CI_f} tells us that any such (di)graph for which the complement is locally finite and connected will be (D)CI, while Proposition \ref{non-CI-on-Zn} tells us that any such (di)graph for which the complement is locally finite with a number of connected components that is infinite or is not square-free, will not be (D)CI.
\end{proof}

Since subgroups of finitely-generated groups need not be finitely-generated, it is again not at all evident whether or not the property of being a (D)CI$_f$-group is inherited by subgroups.  Amongst other things, we would need to determine that all subgroups of (D)CI$_f$-groups are finitely generated.  Setting this aside, it is not evident whether or not finitely generated subgroups of (D)CI$_f$-groups are (D)CI$_f$-groups.  Since for a (D)CI$_f$-group we only know that connected, locally-finite Cayley (di)graphs are (D)CI-graphs, it is hard to see even how, given two locally-finite, isomorphic Cayley (di)graphs on $H \le G$, one might construct suitable Cayley (di)graphs on $G$ that are locally-finite and connected, to use the (D)CI$_f$-property.  One possible approach would involve proving that every Cayley colour graph on $G$ actually has the CI-property, and then using a second colour of edges on a finite number of generators to connect cosets of $H$. We leave this as another question.  To prove any result along these lines (e.g. with the additional condition that $|G:H|$ be finite) would be interesting, we believe.

\begin{question}
If $G$ is a (D)CI$_f$-group and $H \le G$ is finitely-generated, is $H$ a (D)CI$_f$-group?
\end{question}

\section{Ryabchenko's results}\label{sect-Ry}

In this section we state the results from Ryabchenko's paper, and some closely-related results.

Although Ryabchenko does not consider digraphs, his proofs in fact cover the more general situation, and have a number of easy and interesting consequences that he does not make note of.

\begin{thm}[\cite{Ryab}, Theorem 1]\label{Ry-Z}
Let $S \subset \Z$ be finite.  If $\Cay(\Z;S') \cong \Cay(\Z;S)$ then $S'=\pm S$.
\end{thm}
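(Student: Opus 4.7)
The plan is to reduce to the connected case and then invoke Babai's equivalent condition, Theorem~\ref{CI-alt}. For the reduction, write $d = \gcd(S)$; then $\Cay(\Z; S)$ splits into $d$ isomorphic components (the cosets of $d\Z$), each identified via $dk \mapsto k$ with $\Cay(\Z; S/d)$, where $S/d = \{s/d : s \in S\}$. An isomorphism $\Cay(\Z; S) \cong \Cay(\Z; S')$ must match component counts, forcing $\gcd(S') = d$, and must match components individually, so $\Cay(\Z; S/d) \cong \Cay(\Z; S'/d)$. Granted the connected case, $S'/d = \pm S/d$, and rescaling yields $S' = \pm S$.

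Second, assuming $\gcd(S) = 1$ so that $\Gamma = \Cay(\Z; S)$ is connected, I would apply Theorem~\ref{CI-alt}: it suffices to show that all regular $\Z$-subgroups of $\Aut(\Gamma)$ are conjugate. Since $\Aut(\Z) = \{\pm \mathrm{id}\}$, $\Gamma$ being a (D)CI-graph then forces $S' = \pm S$ for any isomorphic $\Cay(\Z; S')$. In fact I would aim for the stronger statement that the translation subgroup is the \emph{unique} regular $\Z$-subgroup of $\Aut(\Gamma)$, making conjugacy trivial.

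The substance of the argument, and the main obstacle, is this uniqueness claim. My plan is to use that $\Gamma$ has exactly two ends and linear growth: writing $M = \max\{|s| : s \in S\}$, one has $d_{\Gamma}(0, n)/|n| \to 1/M$ as $n \to \pm \infty$, so any automorphism either preserves or swaps the ends and is asymptotically $\pm \mathrm{id}$. A finer local argument---exploiting that such an automorphism permutes the out-neighbourhood $n + S$ of each vertex $n$ and that $S$ generates $\Z$---should force it to be an exact translation, possibly composed with $n \mapsto -n$; any element generating a regular $\Z$-action then has a single orbit and must be a genuine translation. The delicate point is ruling out exotic shift-like permutations of $\Z$ that are only asymptotic to translations: the coarse quasi-isometric picture alone does not pin things down, and one must bootstrap from the local combinatorial structure of $S$.
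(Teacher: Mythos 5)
Your reduction to the connected case via $d=\gcd(S)$ is correct and clean (the paper does not make this reduction; its induction handles disconnected graphs directly by counting components of a coloured 2-factor). The strategy you outline for the connected case --- show that the translations form the unique regular $\Z$-subgroup of $\Aut(\Gamma)$, invoke Theorem~\ref{CI-alt}, and use $\Aut(\Z)=\{\pm\mathrm{id}\}$ --- is also sound in outline, and if carried out would prove more than the theorem asks (it is essentially Corollaries~\ref{normal} and~\ref{unique} of the paper in the case $n=1$).

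However, there is a genuine gap, and you have located it yourself: the ``finer local argument'' that upgrades $\gamma(n)=\pm n+O(1)$ to $\gamma(n)=\pm n+c$ is the entire substance of the theorem, and your proposal does not supply it. The ends/linear-growth argument shows only that an automorphism is a bounded perturbation of $\pm\mathrm{id}$ composed with a translation, and, as you note, bounded-displacement permutations of $\Z$ that respect neighbourhoods locally are exactly what must be excluded; asserting that the local structure of $S$ ``should force'' rigidity is a statement of the problem, not a solution. The paper closes precisely this gap by an elementary induction on $|S|$: order $S$ by magnitude, colour the 2-factor formed by the largest element $s_k$, use the maximality of $|s_k|$ together with the triangle inequality to show that the path $x,\,x+s_k,\,x+2s_k$ is the unique path of length two between its endpoints, deduce that each component of the image 2-factor in $\Gamma'$ is generated by a single maximal element of $S'$, compare the numbers of components of the two 2-factors to conclude $|s'_k|=|s_k|$, and then induct on $S\setminus\{s_k\}$ (with a short separate argument to pin down the sign). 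You would need either to reproduce an argument of this kind or to exhibit some other concrete mechanism for the local rigidity; as written, the proposal is a plan whose key step is missing.
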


\begin{proof}
Let $S \subset \Z$ be finite.  Arrange the elements of $S$ in order of non-decreasing magnitude, so  $S=\{s_1, \ldots,  s_k\}$ with $| s_i| \le | s_{i+1}|$ for every $1\le i \le k-1$, with equality if and only if $ s_i=- s_{i+1}$.
Let $\Gamma=\Cay(\Z:S)$ and suppose that $\Gamma'=\Cay(\Z;S') \cong \Gamma$ via the isomorphism $\phi$.  We will show that $S'=\pm S$.

We prove this by induction on $k$.  For the base case of $k=0$, $\Gamma$ and $\Gamma'$ are both empty, so $S=S'=\emptyset$.

Colour the directed edges formed by $s_k$ in $\Gamma$ red; this forms a red 2-factor in $\Gamma$.  The image of this red 2-factor under $\phi$ must be a red 2-factor in $\Gamma'$.  Consider vertices $x'$, $y'=x'+s'_i$ and $z'=y'+s'_j$ in $\Gamma'$ that are consecutive in the red 2-factor, so $x'y'$ and $y'z'$ are red edges in $\Gamma'$.  Let $x$ and $z$ be $\phi^{-1}(x')$ and $\phi^{-1}(z')$ (respectively).  By the maximality of $|s_k|$ in $S$, the red path of length 2 is the unique path of length 2 from $x$ to $z$ in $\Gamma$.  Hence the red path of length 2 must be the unique path of length 2 from $x'$ to $z'$ in $\Gamma'$.  Since $\Z$ is abelian, if $s'_j\neq s'_i$, then the path from $x'$ to $x'+ s'_j$ to $x'+s'_j+s'_i=z'$ would be a different path of length 2 from $x'$ to $z'$, a contradiction.  Hence in any connected component of the red subgraph of $\Gamma'$, every edge comes from some fixed generator $s'_i$ of $\Gamma'$ (or from the pair $s'_i$ and $-s'_i$).

Take an arbitrary connected component of the red 2-factor in $\Gamma'$, and let the corresponding generator of $\Gamma'$ be $s'_i$, and $x'$ a vertex of this component.
We now show that $|s'_i|$ is maximal in $S'$.  If it were not, suppose that for some $j$, $|s'_j|>|s'_i|$.  Then $|s'_i|s'_j$ gives a path of length $|s'_i|$ in one direction or the other between $x'$ and $x'+|s'_j|s_i$.  As before, the pre-images of these vertices under $\phi$ have a red path of length $|s'_j|$ between them (possibly one in each direction), and by the maximality of $s_k$, this is the shortest path.  But $|s'_i|<|s'_j|$, so the pre-image of the path we have just found is shorter, a contradiction that proves our claim.  Since our choice of the connected red component was arbitrary, this in fact shows that every red edge comes from a maximal element of $S'$. Since $|S|=|S'|$, we may assume that the red edges all come from $s'_k$.

Next we show that $s'_k=s_k$.  Since $\phi$ maps the red 2-factor of $\Gamma$ to the red 2-factor of $\Gamma'$, both must have the same number of connected components.  But it is easy to see that the red 2-factor of $\Gamma$ has $|s_k|$ connected components, and the red 2-factor of $\Gamma'$ has $|s'_k|$ connected components, so these are equal.

By our inductive hypothesis and replacing $S'$ with $-S'$ if necessary, we can assume that $S-\{s_k\}=S'-\{s'_k\}$, so the only possible problem arises if $s_k=-s'_k$ and $S-\{s_k\}$ contains $s_i$ but not $-s_i$ for some $i$.  Choose the largest $i$ for which this holds.  By the maximality of $i$ and the ordering of $S$, there is a unique path of length $|s_k|$ in $\Gamma$ that goes from $0$ to $|s_k|s_i$.  We will colour this path green.  Again by the maximal length of $s_i$ under the given condition, $\phi$ must take this to a unique (now green) path of length $|s_k|$ in $\Gamma'$ that goes from some vertex $x'$ to $x'+|s_k|s_i$.
The unique (red) path of length $|s_i|$ between $0$ and $|s_k|s_i$ either travels in the same direction as the green path, or in the opposite direction, and this is determined by the sign of $s_k$.  If the red and green paths travel in the same direction in $\Gamma$, they must also travel in the same direction in $\Gamma'$, and conversely, so the sign of $s'_k$ must be the same as the sign of $s_k$.
\end{proof}

This has the following immediate consequence.

\begin{cor}\label{Z-is-str-CI_f}
The group $\Z$ is a strongly $(D)CI_f$-group.
\end{cor}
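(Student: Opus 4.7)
The plan is to deduce the corollary as a direct consequence of Theorem~\ref{Ry-Z}, since the heavy lifting has already been done there. I would begin by unpacking the definition: to show that $\Z$ is a strongly (D)CI$_f$-group, I need to verify that every locally-finite Cayley (di)graph $\Cay(\Z;S)$ is a (D)CI-graph, i.e., for any $S'\subseteq\Z$ with $\Cay(\Z;S')\cong\Cay(\Z;S)$, there is an automorphism $\alpha\in\Aut(\Z)$ with $\alpha(S)=S'$.

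Local-finiteness of $\Cay(\Z;S)$ is equivalent to $S$ being finite, so Theorem~\ref{Ry-Z} applies. Moreover, if $\Cay(\Z;S')$ is isomorphic to $\Cay(\Z;S)$, it has the same (finite) valency, so $S'$ is also finite. Theorem~\ref{Ry-Z} then yields $S'=\pm S$, meaning either $S'=S$ or $S'=-S$.

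The final step is to observe that $\Aut(\Z)=\{\mathrm{id},-\mathrm{id}\}$. Thus the identity map realises the equality $S'=S$, while the negation map $x\mapsto -x$ realises $S'=-S$. In either case a group automorphism of $\Z$ carries $S$ to $S'$, which is precisely the (D)CI condition for $\Cay(\Z;S)$. Since $S$ was an arbitrary finite subset of $\Z$, this shows $\Z$ is a strongly (D)CI$_f$-group.

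There is no real obstacle here — the corollary is essentially a restatement of Theorem~\ref{Ry-Z} combined with the trivial description of $\Aut(\Z)$. The only thing worth flagging is the minor point that local-finiteness of the source graph forces local-finiteness (hence finiteness of $S'$) of any isomorphic target, so that Theorem~\ref{Ry-Z} genuinely applies to both sides.
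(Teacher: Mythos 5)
Your proposal is correct and follows essentially the same route as the paper: invoke Theorem~\ref{Ry-Z} to get $S'=\pm S$ and then observe that $\Aut(\Z)=\{\mathrm{id},-\mathrm{id}\}$ supplies the required automorphism. The extra remark that an isomorphic target graph is automatically locally finite is a harmless (and reasonable) clarification that the paper leaves implicit.
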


\begin{proof}
If $\Cay(\Z;S)$ and $\Cay(\Z;S')$ are isomorphic and $S$ is finite, then by Theorem \ref{Ry-Z}, $S'=\pm S$, so either the identity or the automorphism of $\Z$ that takes every integer to its negative will act as an isomorphism from $\Cay(\Z;S)$ to $\Cay(\Z;S')$.
\end{proof}

The next result does not look at all like the statement of Theorem 2 from~\cite{Ryab}, but is the clearest and most precise statement of the proof he gives for that theorem.

\begin{thm}[\cite{Ryab}, Theorem 2]\label{isos-on-Zn-are-autos}
Let $S$ be a finite generating set for $\Z^n$, and let $\Gamma=\Cay(\Z^n;S)$.  Then if $\Gamma'=\Cay(\Z^n;S')$ and there is an isomorphism $\phi:\Gamma\to\Gamma'$ such that $\phi$ takes the identity of $\Z^n$ to the identity of $\Z^n$, then $\phi$ is a group automorphism of $\Z^n$.
\end{thm}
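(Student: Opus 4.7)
The plan is to upgrade $\phi$ from a graph isomorphism to a label-preserving isomorphism of Cayley graphs. Concretely, I shall try to show that for each generator $s\in S$, the difference $\phi(y+s)-\phi(y)$ lies in $S'$ and is independent of $y\in\Z^n$. Setting $y=0$ then identifies this constant with $\phi(s)$; iterating the resulting identity $\phi(y+s)=\phi(y)+\phi(s)$ along any $S$-word for an element $x\in\Z^n$ yields $\phi(x+y)=\phi(x)+\phi(y)$ for all $x,y$. Combined with $\phi(0)=0$ and the bijectivity of $\phi$, this is the required group automorphism.

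The first half of the claim is immediate: since $\phi$ is an adjacency-preserving bijection fixing $0$, it restricts to a bijection $S\to S'$, so $\phi(y+s)-\phi(y)\in S'$ for every $y$ and every $s\in S$. The real work is to show that this difference does not depend on $y$. Equivalently, conjugation by $\phi$ must send each left-translation $\tau_s\in\Aut(\Gamma)$ to the left-translation $\tau_{\phi(s)}\in\Aut(\Gamma')$; globally, the regular translation subgroup $T\cong\Z^n$ of $\Aut(\Gamma)$ is carried onto the regular translation subgroup $T'\cong\Z^n$ of $\Aut(\Gamma')$.

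To prove this, I would mimic the 2-factor analysis from Ryabchenko's proof of Theorem \ref{Ry-Z}. For each $s\in S$, the set $F_s=\{\{y,y+s\}:y\in\Z^n\}$ is a 2-factor of $\Gamma$ whose components are bi-infinite paths, one per coset of $\la s\ra$ (this uses that $\Z^n$ is torsion-free). The image $\phi(F_s)$ is a spanning 2-regular subgraph of $\Gamma'$ with the same component structure. The key claim is that $\phi(F_s)=F_{s'}$ for a single $s'\in S'$, necessarily $s'=\phi(s)$. To pin this down I would combine the unique-shortest-path technique from the $\Z$ case with the preservation of the commuting squares $\{y,y+s,y+s+t,y+t\}$ (which abound in $\Gamma$ because $\Z^n$ is abelian and which $\phi$ must carry to graph $4$-cycles in $\Gamma'$). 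An induction on the generators, ordered by a suitable notion of magnitude in the spirit of Ryabchenko's argument, should identify the 2-factors one by one; once every $F_s$ is matched to $F_{\phi(s)}$, the required commutation with translations follows by tracing any path edge-by-edge.

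The main obstacle is precisely this colour-preservation step. A priori, the bi-infinite path of $\phi(F_s)$ through $0$ could be realised in $\Gamma'$ by an alternating sequence of several distinct elements of $S'$, rather than by repeated translates of a single $s'$. Ruling this out demands the combined rigidity of torsion-freeness, local finiteness, and the $4$-cycle constraints forced by abelianness; the delicate point is that several generators of $S$ may have equal magnitude or sit in compatible cosets, so care is needed in choosing the inductive order and in propagating each local identification to all translates without circularity. I expect this case analysis, rather than any conceptual leap, to be the bulk of the work.
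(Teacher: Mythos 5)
Your reduction is the right one, and it is the same one the paper uses: show that for each $s\in S$ the difference $\phi(y+s)-\phi(y)$ is a single fixed element of $S'$, then read off additivity of $\phi$ from $\phi(0)=0$. But the proposal stops exactly where the proof has to start. The entire content of the theorem is the ``colour-preservation step'' that you defer to an unexecuted ``combination'' of the unique-shortest-path technique, $4$-cycles, and an induction ``in the spirit of'' the $\Z$ case; you yourself flag this as the bulk of the work and do not supply it. As written, there is no argument ruling out the scenario you correctly identify as the obstacle (a single component of $\phi(F_s)$ realised by an alternating word in several elements of $S'$), nor any argument for why two different components of $F_s$ (i.e.\ rays in different cosets of $\la s\ra$) must map to rays built from the \emph{same} $s'$. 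That second issue is genuinely separate from the first and your sketch does not mention it at all.

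For comparison, the paper splits the missing step into two precise claims, neither of which is a routine case analysis. To show each $s$-ray maps to a single-generator ray (your first obstacle), it orders $S$ by Euclidean magnitude and takes $s_i$ a violator of \emph{maximal} magnitude: if $\phi(x+s_i)-\phi(x)=s'_j\neq s'_k=\phi(x+2s_i)-\phi(x+s_i)$, then commutativity in $\Z^n$ gives a second $2$-path in $\Gamma'$ from $\phi(x)$ to $\phi(x+2s_i)$ through $\phi(x)+s'_k$; pulling it back to $\Gamma$ and applying the triangle inequality to the two preimage steps (which sum to $2s_i$ but are not both $s_i$) produces a generator strictly longer than $s_i$, which by maximality is already well-behaved, giving the contradiction. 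Note that this is a \emph{downward} induction anchored at the longest generators, so your worry about ties in magnitude and ``choosing the inductive order'' without circularity is resolved automatically; no basis case involving $F_{s_k}$ alone needs to be established first. To pass from one $s$-ray to all $s$-rays (the step your sketch omits), the paper takes a first pair of adjacent rays mapping to rays of distinct types $s'$ and $s''$, joined by edges from a single $s_{i}$, and applies the pigeonhole principle to the infinitely many parallel edges $x+js\to x+s_i+js$, $j\in\Z$: two of their images come from the same element of $S'$, forcing $(j_2-j_1)s'=(j_2-j_1)s''$ and hence $s'=s''$ since $\Z^n$ is torsion-free. Until you write out arguments doing the work of both of these claims, the proposal is a plan, not a proof.
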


\begin{proof}
The elements of $S$ can be thought of as vectors in $\mathbb R^n$, and their magnitude calculated under the usual metric for this vector space. As in the proof of Theorem \ref{Ry-Z}, order the elements of $S$ in non-decreasing order of magnitude under this measure.  Let $\Gamma$, $\Gamma'$, and $\phi$ be as in the statement of the theorem.

{\bf Claim 1.} If $s \in S$ has the property that for any vertex $x$ of $\Gamma$, we have $\phi(x+s)-\phi(x)=\phi(x+2s)-\phi(x+s)$ (that is, any two consecutive edges of $\Gamma'$ that are the images of edges of $\Gamma$ that come from $s$, themselves come from the same $s' \in S'$), then in fact there is a fixed element $s' \in S'$ such that for every vertex $x$ of $\Gamma$, we have $\phi(x+s)-\phi(x)=s'$ (that is, any edge of $\Gamma'$ that is the image of an edge of $\Gamma$ that comes from $s$, itself comes from $s'$).  Essentially, this says that if each ray in $\Gamma$ formed by $s$ maps to a ray in $\Gamma'$ formed by some element of $S'$, then all of these rays in $\Gamma'$ must actually be formed by the same fixed element of $S'$.

To prove Claim 1, assume that $s$ is a counterexample to Claim 1.  Thus, $s$ satisfies the hypothesis of the claim, but some two rays of $\Gamma$ formed by $s$ are mapped to rays of $\Gamma'$ that are formed by two distinct elements $s'$ and $s''$ of $S'$.  Since $\Gamma$ is connected, there is a path in $\Gamma$ from a vertex of the first ray to a vertex of the second ray, and somewhere along this path there is a first ray of $\Gamma$ that maps to a ray of $\Gamma'$ formed by $s''$.  Thus, without loss of generality, we may assume that there is a vertex $x$ of $\Gamma$ and a single element $s_i \in S$, such that $\phi(x+s)-\phi(x)=s'$, but $\phi(x+s_i+s)-\phi(x+s_i)=s''$.  Now since $\Z^n$ is abelian, for any $j$ we have that $x+js$ is adjacent to $x+s_i+js$ via an edge that comes from $s_i$.  Hence $\phi(x+js)$ must be adjacent to $\phi(x+s_i+js)$, and by assumption, $\phi(x+js)=\phi(x)+js'$, while $\phi(x+s_i+js)=\phi(x+x_i)+js''$.  Since $j$ is running through $\Z$ but $S'$ is finite, by the pigeon-hole principle sooner or later some pair of such adjacencies must come from the same generator $s'_k \in S'$.  In other words, there exist distinct $j_1$ and $j_2$ such that $\phi(x+j_2s)+s_k'=\phi(x+j_1s)+s_k'+(j_2-j_1)s''$.  But this means that $\phi(x)+j_2s'=\phi(x)+j_1s'+(j_2-j_1)s''$, so $(j_2-j_1)s'=(j_2-j_1)s''$.  Since $j_2 \neq j_1$, this forces $s''=s'$, a contradiction that proves Claim 1.

{\bf Claim 2.} The hypothesis of  Claim 1 actually does hold for any vertex $x$ of $\Gamma$ and any $s \in S$.  That is, we show that $\phi(x+s)-\phi(x)=\phi(x+2s)-\phi(x+s)$. In other words, any two consecutive edges of $\Gamma'$ that are the images of edges that come from a fixed element $s \in S$ of $\Gamma$, themselves come from the same $s' \in S'$.  

Towards a contradiction to Claim 2, let $i$ be maximized subject to the condition that $s_i$ violates this property.  So there is some vertex $x$ of $\Gamma$ such that $\phi(x+s_i)-\phi(x)=s'_j$ and $\phi(x+2s_i)-\phi(x+s_i)=s'_k$, and $k \neq j$.  Now there is a second path of length 2 in $\Gamma'$ from $\phi(x)$ to $\phi(x+2s_i)$, that travels via the vertex $\phi(x)+s'_k \neq \phi(x)+s'_j=\phi(x+s_i)$.  Hence there is a second path of length 2 in $\Gamma$ that goes from $x$ to $x+2s_i$, via the vertex $y=\phi^{-1}(\phi(x)+s'_k)$.  Since $\phi(x+s_i)=\phi(x)+s'_j$, we have $y \neq x+s_i$, so $y-x$ and $x+2s_i-y$ are distinct elements of $S$.  By the triangle inequality, at least one of $y-x$ and $x+2s_i-y$ must be longer than $s_i$.  Since $y-x, x+2s_i-y \in S$, let $s_{\ell}$ be one of these vectors that is longer than $s_i$.  By our ordering of $S$, we have $\ell>i$.  So by our choice of $i$, we must have $s_\ell$ satisfies the hypothesis of Claim 1, so that there is some $s' \in S'$ such that every edge of $\Gamma$ that comes from $s_\ell$ maps to an edge of $\Gamma'$ that comes from $s'$.  Thus, either $s'_j=s'$ or $s'_k=s'$.  But we have $\phi(x+s_i)-\phi(x)=s'_j$ and $\phi(x+2s_i)-\phi(x+s_i)=s'_k$, a contradiction that proves Claim 2.

Since $\phi$ maps the identity of $\Z^n$ to the identity of $\Z^n$, we clearly have $\phi(S)=S'$ since these are the neighbours of the identity.  We can therefore list the elements of $S'$ as $\phi(s_1), \ldots, \phi(s_k)$, where $S=\{s_1, \ldots, s_k\}$.  Now Claims 1 and 2 together establish that for any vertex $x$ of $\Gamma$, if $x=a_1s_1+\ldots +a_ks_k$, then $\phi(x)=a_1\phi(s_1) +\ldots+a_k\phi(s_k)$.  Since $\Gamma$ is connected, $\langle S \rangle =\langle S' \rangle =\Z^n$, so this shows that $\phi$ is in fact an automorphism of $\Z^n$.
\end{proof}

This has an easy corollary, which is (except for his omission of his assumption that the graphs are locally-finite) the result that was stated in \cite{Ryab}, Theorem 2.

\begin{cor}\label{Zn-is-CI_f}
The group $\Z^n$ is a $(D)CI_f$-group.
\end{cor}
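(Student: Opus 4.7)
The plan is to reduce this immediately to Theorem~\ref{isos-on-Zn-are-autos}. Let $\Gamma = \Cay(\Z^n; S)$ be a connected locally-finite Cayley (di)graph on $\Z^n$, so $S$ is finite and $\langle S \rangle = \Z^n$. Suppose $\Gamma' = \Cay(\Z^n; S')$ and $\psi \colon \Gamma \to \Gamma'$ is an isomorphism. Since $\Gamma$ is connected, so is $\Gamma'$, hence $\langle S' \rangle = \Z^n$ as well, and $|S'| = |S|$ is finite.

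The key observation is that Cayley (di)graphs are vertex-transitive via translations of the group, so translation by any element of $\Z^n$ is an automorphism of $\Gamma'$. In particular, set
\[
\phi(x) = \psi(x) - \psi(0).
\]
Then $\phi$ is still an isomorphism from $\Gamma$ to $\Gamma'$, and $\phi(0) = 0$. Now Theorem~\ref{isos-on-Zn-are-autos} applies: $\phi$ is a group automorphism of $\Z^n$.

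To finish, note that the out-neighbours of $0$ in $\Gamma$ are exactly $S$, and the out-neighbours of $0$ in $\Gamma'$ are exactly $S'$. Since $\phi$ is a graph isomorphism with $\phi(0) = 0$, it must carry $S$ bijectively to $S'$. Thus $\phi$ is a group automorphism of $\Z^n$ with $\phi(S) = S'$, so $\Gamma$ is a (D)CI-graph. As $\Gamma$ was an arbitrary connected locally-finite Cayley (di)graph on $\Z^n$, this shows $\Z^n$ is a (D)CI$_f$-group.

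There is no real obstacle here; all the substantive work has been done in Theorem~\ref{isos-on-Zn-are-autos}, and the only step that might look non-trivial is the translation trick, which simply exploits the regular action of $\Z^n$ on itself by left translation to reduce an arbitrary isomorphism to one that pins down the identity vertex.
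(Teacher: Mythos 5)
Your proof is correct and follows essentially the same route as the paper's: translate the given isomorphism by $-\psi(\mathbf{0})$ so that it fixes the identity vertex, then invoke Theorem~\ref{isos-on-Zn-are-autos}. The only difference is that you spell out the final observation that $\phi(S)=S'$ because these sets are the (out-)neighbourhoods of $\mathbf{0}$, which the paper leaves implicit here (it appears explicitly inside the proof of Theorem~\ref{isos-on-Zn-are-autos}).
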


\begin{proof}
Let $\Gamma=\Cay(\Z^n;S)$ and $\Gamma'=\Cay(\Z^n;S')$ with $\phi:\Gamma\to\Gamma'$ an automorphism.  Let $\bf 0$ represent the identity element of $\Z^n$.  If $c$ is the element of $\Z^n$ that corresponds to the vertex $\phi(\bf 0)$, then $\phi'=\phi-c$ is an isomorphism from $\Gamma$ to $\Gamma'$ that takes $\bf 0$ to $\bf 0$.  By Theorem \ref{isos-on-Zn-are-autos}, $\phi'$ must be an automorphism of $\Z^n$.
\end{proof}

The following corollary was not mentioned in Ryabchenko's paper but is an immediate consequence of his proof.

\begin{cor}\label{normal}
If $\Gamma=\Cay(\Z^n;S)$ for some finite generating set $S$ of $\Z^n$, then $\Gamma$ is a normal Cayley (di)graph of $\Z^n$.
\end{cor}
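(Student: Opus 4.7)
The plan is to show this as an essentially immediate consequence of Theorem~\ref{isos-on-Zn-are-autos}. Recall that a Cayley (di)graph $\Gamma=\Cay(G;S)$ is called normal when the regular copy of $G$ sitting inside $\Aut(\Gamma)$ is a normal subgroup, or equivalently, when every element of the stabiliser $\Stab_{\Aut(\Gamma)}(e)$ acts on $G$ as a group automorphism (which necessarily preserves $S$). So the task reduces to showing that every automorphism of $\Gamma$ that fixes the identity of $\Z^n$ is a group automorphism of $\Z^n$.

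First I would take an arbitrary $\phi \in \Aut(\Gamma)$ and let $c = \phi(\mathbf{0}) \in \Z^n$. Then $\phi' = \phi - c$ is again an automorphism of $\Gamma$ (translations by elements of $\Z^n$ are automorphisms of any Cayley (di)graph on $\Z^n$, since $\Z^n$ acts regularly on itself), and it has the additional property that $\phi'(\mathbf{0}) = \mathbf{0}$. Viewing $\phi'$ as an isomorphism from $\Gamma=\Cay(\Z^n;S)$ to itself (so with $S'=S$), the hypotheses of Theorem~\ref{isos-on-Zn-are-autos} are satisfied: $S$ is a finite generating set by assumption, and $\phi'$ fixes the identity. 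The theorem then immediately yields that $\phi'$ is a group automorphism of $\Z^n$.

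Consequently $\phi = \phi' + c$ is a translation composed with a group automorphism, so $\Aut(\Gamma)$ is contained in the holomorph $\Z^n \rtimes \Aut(\Z^n)$, and the regular subgroup $\Z^n$ is normal in $\Aut(\Gamma)$. This is precisely the statement that $\Gamma$ is a normal Cayley (di)graph of $\Z^n$. There is really no obstacle here: all the heavy lifting was done in Theorem~\ref{isos-on-Zn-are-autos}, and the corollary amounts to the observation that the theorem's conclusion applies to the special case $\Gamma' = \Gamma$ after pre-composing with an appropriate translation.
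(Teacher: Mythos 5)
Your proof is correct and follows essentially the same route as the paper: reduce to an automorphism of $\Gamma$ fixing $\mathbf{0}$ (via composition with a translation) and apply Theorem~\ref{isos-on-Zn-are-autos} to conclude it is a group automorphism, whence $\Z^n$ is normal in $\Aut(\Gamma)$. If anything, your version spells out the translation step and the normality criterion a little more explicitly than the paper's very terse proof does.
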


\begin{proof}
Let ${\bf 0}$ be the vertex of $\Gamma$ corresponding to the identity element of $\Z^n$.
Let $\gamma$ be any automorphism of $\Gamma$.  Then $\gamma$ fixes ${\bf 0}$.  By Theorem \ref{isos-on-Zn-are-autos}, $\gamma \in \Aut(\Z^n)$, so $\Z^n \triangleleft \Aut(\Gamma)$.
\end{proof}

The final corollary presented in this section is slightly less obvious, but is still essentially a consequence of the proof in \cite{Ryab}.

\begin{cor}\label{unique}
If $\Gamma=\Cay(\Z^n;S)$ for some finite generating set $S$ of $\Z^n$, then $\Aut(\Gamma)$ has a unique regular subgroup isomorphic to $\Z^n$.
\end{cor}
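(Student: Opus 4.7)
The plan is to leverage the semidirect-product decomposition $\Aut(\Gamma) = T \rtimes A$, where $T$ is the image of $\Z^n$ acting by translations and $A$ is the stabiliser of the identity vertex $\mathbf{0}$. By Corollary~\ref{normal} we have $T \triangleleft \Aut(\Gamma)$, which gives the semidirect decomposition, and by Theorem~\ref{isos-on-Zn-are-autos} we have $A \le \Aut(\Z^n)$. Since every element of $A$ permutes the finite set $S$ of neighbours of $\mathbf{0}$ and is determined by its action on the generating set $S$, the group $A$ is finite.

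Next I would let $H \le \Aut(\Gamma)$ be any regular subgroup with $H \cong \Z^n$, and decompose each $h \in H$ uniquely as $h = t_{v_h}\alpha_h$ with $v_h \in \Z^n$ and $\alpha_h \in A$. Because $T$ is normal, the map $\phi\colon H \to A$ sending $h \mapsto \alpha_h$ is a homomorphism with kernel $K = H \cap T$. Since $\phi(H)$ is finite while $H \cong \Z^n$ has rank $n$, the subgroup $K$ has finite index in $H$, and therefore also has rank $n$; viewed inside $T$, it corresponds to a full-rank sublattice $L$ of $\Z^n$ with $K = \{t_v : v \in L\}$.

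The heart of the argument will be to exploit commutativity of $H$. Expanding $h_1 h_2 = h_2 h_1$ in the semidirect product yields $\alpha_1\alpha_2 = \alpha_2\alpha_1$ together with the cocycle relation $v_1 + \alpha_1(v_2) = v_2 + \alpha_2(v_1)$. Specialising to $h_1 \in K$ (so $\alpha_1 = 1$), this collapses to $\alpha_2(v_1) = v_1$ for every $v_1 \in L$ and every $h_2 \in H$. Thus every $\alpha \in \phi(H)$ fixes the full-rank sublattice $L$ pointwise; since $\phi(H) \le GL_n(\Z)$ and a $\Z$-linear automorphism of $\Z^n$ that fixes $n$ linearly independent vectors must be the identity, $\phi$ is trivial. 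Hence $H \le T$, and regularity of both actions forces $H = T$.

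I expect the only delicate step to be the verification that $K$ has full rank in $T$, which amounts to the standard fact that if a finitely generated abelian group has a finite quotient then it retains its rank; the remainder is routine semidirect-product bookkeeping combined with the elementary observation about $GL_n(\Z)$ fixing a sublattice.
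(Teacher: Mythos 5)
Your proposal is correct and follows essentially the same route as the paper: both isolate the vertex-stabiliser part of an element of the second regular copy of $\Z^n$ (a group automorphism of $\Z^n$ by Theorem~\ref{isos-on-Zn-are-autos}) and use commutativity to show that it fixes the full-rank lattice $H\cap T$ pointwise, hence is trivial, so the second copy lies inside the translations. The only cosmetic difference is that you obtain finiteness of the index of $H\cap T$ in $H$ from the finiteness of the point-stabiliser, whereas the paper invokes the fact that the intersection of two finite-index subgroups of $\Aut(\Gamma)$ has finite index.
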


\begin{proof}
Let $Z_1$ and $Z_2$ be two regular subgroups isomorphic to $\Z^n$ in $\Aut(\Gamma)$ (with $Z_1=\langle S \rangle$).  Let $\alpha' \in Z_2$ be arbitrary; we plan to show that $\alpha' \in Z_1$.  Let $\alpha \in Z_1$ such that $\alpha'(\bf 0)=\alpha(\bf 0)$, where $\bf 0$ is the vertex corresponding to the identity of $\Z^n$.  Then $\beta=\alpha^{-1}\alpha' $ is an automorphism of $\Gamma$ that fixes $\bf 0$, so by Theorem \ref{isos-on-Zn-are-autos}, $\beta \in \Aut(Z_1)$.

Since $S$ is finite, $Z_1$ and $Z_2$ each have finite index in $\Aut(\Gamma)$.  It is well-known that the intersection of two groups of finite index, itself has finite index (c.f. Problem 6, Section 5.1, \cite{Ash}).  Let $Z=Z_1\cap Z_2$.  Clearly, since $Z_1$ and $Z_2$ are abelian, $\beta$ commutes with every element of $Z$.  But since $\beta \in \Aut(Z_1)$, it can only commute with the elements of $Z$ if it fixes all of them.  This means that $\beta$ fixes a finite-index subgroup of $Z_1$ pointwise, so since $\beta \in \Aut(Z_1)$, we must have $\beta=1$.  Hence $\alpha'=\alpha \in Z_1$, as claimed.  Since $\alpha'$ was arbitrary, $Z_2=Z_1$ is the unique regular subgroup isomorphic to $\Z^n$ in $\Aut(\Gamma)$.
\end{proof}

\section{Characterisation of locally-finite (D)CI-graphs on $\Z^n$}

We have already seen that $\Z$ is a strongly DCI$_f$-group, and that for $n>1$, $\Z^n$ is a DCI$_f$-group but not a strongly (D)CI$_f$-group.  The goal of this section of the paper is to give a precise characterisation of the locally-finite Cayley (di)graphs on $\Z^n$ that are (D)CI-graphs (where $n>1$).  Throughout the remainder of this section, we assume $n>1$.

We have also seen that if the number of connected components of a locally-finite Cayley (di)graph on $\Z^n$ is either infinite or divisible by a square, then the graph is not a (D)CI-graph.

To prove our characterisation, we will need the following well-known corollary of Smith normal form (cf.~4.6.1 of~\cite{Ash}).

\begin{thm}[Simultaneous Basis Theorem]\label{simult-basis}
Let $M$ be a free abelian group of finite rank $n \ge 1$ over $\Z$, and let $H$ be a subgroup of $M$ of rank $r$.  Then there is a basis $\{y_1, \ldots, y_n\}$ for $M$ and nonzero elements $a_1, \ldots, a_r \in \Z$ such that $r \le n$, $a_i$ divides $a_{i+1}$ for all $i$, and $\{a_1y_1, \ldots, a_ry_r\}$ is a basis for $H$.
\end{thm}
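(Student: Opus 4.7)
The plan is to derive the statement from Smith Normal Form by reformulating it as a matrix problem. The content is essentially a translation between matrix language and basis language.

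First I would dispose of the preliminary fact that every subgroup $H$ of a free abelian group $M$ of finite rank $n$ is itself free abelian of some rank $r \le n$. This is standard: fixing any basis of $M$, project $H$ onto the first coordinate to get a subgroup $d\Z$ of $\Z$; the kernel sits in the free abelian group of rank $n-1$ spanned by the remaining basis vectors, and induction on $n$ yields the claim (with $H$ splitting off a copy of $\Z$ whenever $d>0$).

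Next, choose any basis $\{e_1,\ldots,e_n\}$ of $M$ and any basis $\{h_1,\ldots,h_r\}$ of $H$, and form the $n\times r$ integer matrix $A$ whose $j$-th column records the coefficients of $h_j$ in the basis $\{e_i\}$; since the $h_j$ are $\Z$-linearly independent, $A$ has rank $r$. Smith Normal Form produces $P \in GL_n(\Z)$ and $Q \in GL_r(\Z)$ with $PAQ = D$, where $D$ is an $n \times r$ matrix with diagonal entries $d_1 \mid d_2 \mid \cdots \mid d_r$ and zeros elsewhere; each $d_i \ne 0$ because $A$ has rank $r$.

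Finally, I would interpret $P$ and $Q$ as changes of basis. Define $y_j$ to be the element of $M$ whose coordinates in $\{e_i\}$ form the $j$-th column of $P^{-1}$; since $P^{-1} \in GL_n(\Z)$, the set $\{y_1,\ldots,y_n\}$ is a basis of $M$. Similarly, since $Q \in GL_r(\Z)$, the columns of $AQ$ give a new basis $\{h_1',\ldots,h_r'\}$ of $H$. A direct computation shows that the coordinates of $h_j'$ in the basis $\{y_1,\ldots,y_n\}$ are exactly the $j$-th column of $PAQ = D$, which is $d_j$ in position $j$ and $0$ elsewhere; hence $h_j' = d_j y_j$, and setting $a_j = d_j$ completes the argument. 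The only substantive content is Smith Normal Form itself, which the theorem statement allows to be cited; the main point requiring care is to keep the bookkeeping about changes of basis straight, particularly which of $P,P^{-1},Q,Q^{-1}$ acts on which side.
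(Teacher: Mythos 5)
Your derivation is correct, and it follows exactly the route the paper indicates: the paper states this result without proof, citing it as a ``well-known corollary of Smith normal form'' (4.6.1 of the reference by Ash), and your write-up is a correct and complete fleshing-out of that derivation, with the change-of-basis bookkeeping (columns of $P^{-1}$ giving the new basis of $M$, columns of $AQ$ giving the new basis of $H$) handled properly. Nothing further is needed.
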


\begin{cor}\label{conj-H}
Let $H = b_1\Z \times \ldots \times b_n\Z$ 
for some $b_1, \ldots, b_n$ with 
$\Pi_{i=1}^n b_i=k$, where $k$ is finite and square-free.  Then there is an automorphism $\sigma$ of $\Z^n$ such that $H^\sigma= k\Z \times \Z^{n-1}$.
\end{cor}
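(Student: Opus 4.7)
The plan is to apply the Simultaneous Basis Theorem directly to $H$ as a subgroup of $M = \Z^n$, and then to exploit the hypothesis that the index $k$ is square-free in order to control the invariant factors that appear.

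First I would note that $H$ has finite index $k$ in $\Z^n$: indeed, since each $b_i$ is a nonzero integer and $\prod b_i = k$, each quotient $\Z/b_i\Z$ is finite of order $|b_i|$, and $[\Z^n : H] = \prod |b_i| = k$. In particular $H$ has full rank $n$, so Theorem \ref{simult-basis} furnishes a basis $\{y_1, \ldots, y_n\}$ of $\Z^n$ and integers $a_1, \ldots, a_n$ with $a_i \mid a_{i+1}$ such that $\{a_1 y_1, \ldots, a_n y_n\}$ is a basis for $H$. Computing the index from this basis gives $\prod a_i = k$.

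The key step is to show that $a_1 = \cdots = a_{n-1} = 1$ and $a_n = k$. Suppose some $a_i$ with $i < n$ satisfies $|a_i| > 1$, and let $p$ be a prime dividing $a_i$. Then by the divisibility chain $a_i \mid a_{i+1} \mid \cdots \mid a_n$, the prime $p$ divides both $a_i$ and $a_n$, so $p^2 \mid a_i a_n \mid \prod_j a_j = k$, contradicting square-freeness of $k$. Hence $a_i = \pm 1$ for $i < n$, and we may absorb signs into the $y_i$ to assume $a_i = 1$; then $a_n = k$. So $\{y_1, \ldots, y_{n-1}, k y_n\}$ is a basis of $H$.

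Finally I would define $\sigma$ to be the unique automorphism of $\Z^n$ carrying the basis $(y_1, \ldots, y_{n-1}, y_n)$ to the basis $(e_2, \ldots, e_n, e_1)$, where $e_1, \ldots, e_n$ is the standard basis. Applying $\sigma$ to the basis of $H$ gives $\{e_2, \ldots, e_n, k e_1\}$, which generates $k\Z \times \Z^{n-1}$, so $H^\sigma = k\Z \times \Z^{n-1}$ as required. The only genuinely non-routine ingredient is the square-free divisibility argument in the middle step; everything else is a direct bookkeeping application of the Simultaneous Basis Theorem and the universal property of bases of free abelian groups.
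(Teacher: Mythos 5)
Your proposal is correct and follows essentially the same route as the paper's proof: apply the Simultaneous Basis Theorem, use the divisibility chain together with square-freeness of $k$ to force $a_1=\cdots=a_{n-1}=1$ and $a_n=k$, then permute the basis to land on $k\Z\times\Z^{n-1}$. You merely spell out the square-free step (via a prime dividing both $a_i$ and $a_n$) that the paper leaves implicit.
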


\begin{proof}
By Theorem~\ref{simult-basis}, there is a basis $\{y_1, \ldots, y_n\}$ for $\Z^n$ and nonzero integers $a_1, \ldots a_n$ such that $a_i$ divides $a_{i+1}$ for all $i$, and $\{a_1y_1, \ldots, a_ny_n\}$ is a basis for $H$.  Notice that the index of $H$ in $\Z^n$ is clearly $a_1a_2 \ldots a_n$, so for this product to be the square-free integer $k$ (given that $a_i$ divides $a_{i+1}$ for every $i$), the only possibility is that $a_1=\ldots=a_{n-1}=1$ and $a_n=k$.  Thus, there is a basis $\{y_1, \ldots, y_n\}$ for $\Z^n$ such that $\{y_1, \ldots, y_{n-1},ky_n\}$ is a basis for $H$, so taking $\sigma$ to be the automorphism of $\Z^n$ that takes $y_n$ to $e_1$, $y_1$ to $e_n$, and $y_i$ to $e_i$ for $2 \le i \le n-1$, where $e_1, \ldots, e_n$ is the standard basis for $\Z^n$, establishes the desired result.
\end{proof}

We are now ready to give our characterisation.

\begin{thm}\label{main}
Let $\Gamma=\Cay(\Z^n,S)$ be non-empty and locally finite, with $n>1$.  Then $\Gamma$ is a (D)CI-graph if and only if:
\begin{itemize}
\item $\Gamma$ (or its underlying graph) has a finite, square-free number of components; and
\item $\Aut(H)= \Aut(H)_{\Z^n}\cdot\Stab_{\Aut(H)}(S)$,
\end{itemize}
where $H = \langle S \rangle$, $\Stab_{\Aut(H)}(S)$ is the group of all automorphisms of $H$ that fix $S$ setwise, and $\Aut(H)_{\Z^n}$ is the group of all automorphisms of $H$ that can be extended to automorphisms of $\Z^n$.
\end{thm}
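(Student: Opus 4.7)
The plan is to prove necessity and sufficiency separately, using Proposition~\ref{non-CI-on-Zn}, Corollary~\ref{conj-H}, and Theorem~\ref{isos-on-Zn-are-autos} (Ryabchenko's result) as the main ingredients.

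For necessity, condition (1) is immediate from Proposition~\ref{non-CI-on-Zn}. For condition (2), I would fix $\alpha \in \Aut(H)$, set $S' = \alpha(S)$, and construct an isomorphism $\phi : \Gamma \to \Cay(\Z^n, S')$ as follows: choose coset representatives $c_1, \ldots, c_k$ of $H$ in $\Z^n$ and define $\phi(c_i + h) = c_i + \alpha(h)$ for $h \in H$. Since $S \subseteq H$, every edge of $\Gamma$ lies inside a single $H$-coset, and $\phi$ acts as $\alpha$ on each coset, so a routine check shows $\phi$ is a (di)graph isomorphism. The (D)CI hypothesis then produces $\tilde\beta \in \Aut(\Z^n)$ with $\tilde\beta(S) = S'$; this $\tilde\beta$ preserves $H = \langle S \rangle$, so its restriction $\beta$ lies in $\Aut(H)_{\Z^n}$, and $\gamma := \beta^{-1}\alpha$ satisfies $\gamma(S) = S$, giving the decomposition $\alpha = \beta\gamma \in \Aut(H)_{\Z^n} \cdot \Stab_{\Aut(H)}(S)$.

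For sufficiency, suppose $\phi : \Gamma \to \Gamma' = \Cay(\Z^n, S')$ is an isomorphism and set $H' = \langle S' \rangle$. The components of $\Gamma$ and $\Gamma'$ are cosets of $H$ and $H'$, so $|\Z^n : H| = |\Z^n : H'|$ equals the square-free index $k$ from (1). Using the Simultaneous Basis Theorem to bring each of $H, H'$ into the form $b_1\Z \times \cdots \times b_n\Z$, Corollary~\ref{conj-H} produces $\sigma \in \Aut(\Z^n)$ with $\sigma(H) = H'$. Replacing $S'$ by $\sigma^{-1}(S')$ (which differs from $S'$ by the group automorphism $\sigma^{-1}$) reduces the problem to the case $H = H'$: any group automorphism $\tau$ taking $S$ to the modified $S'$ then composes with $\sigma$ to solve the original problem. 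After post-composing $\phi$ with a suitable translation, we may further assume $\phi(0) = 0$, so $\phi$ restricts to an isomorphism $\Cay(H, S) \to \Cay(H, S')$ fixing the identity. Since $H$ is itself a free abelian group of rank $n$ and $S$ generates it, Theorem~\ref{isos-on-Zn-are-autos} applied to $H$ yields $\alpha \in \Aut(H)$ with $\alpha(S) = S'$. Condition (2) now writes $\alpha = \beta\gamma$ with $\beta \in \Aut(H)_{\Z^n}$ and $\gamma(S) = S$; hence $\beta(S) = \alpha(S) = S'$, and the extension $\tilde\beta \in \Aut(\Z^n)$ of $\beta$ is the required group automorphism.

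The main subtlety I anticipate is the reduction $H = H'$ in the sufficiency direction: Corollary~\ref{conj-H} is stated only for subgroups of the very specific form $b_1\Z \times \cdots \times b_n\Z$, so a preliminary application of Smith normal form via Theorem~\ref{simult-basis} is needed to put arbitrary finite-index subgroups of square-free index into that form. Once $H = H'$ is established, Ryabchenko's theorem carries the argument inside $H$, and condition~(2) supplies exactly what is needed to promote an automorphism of $H$ to one of $\Z^n$ that still sends $S$ to $S'$, since right-multiplying $\alpha$ by an element of $\Stab_{\Aut(H)}(S)$ does not alter its action on $S$.
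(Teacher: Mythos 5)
Your proposal is correct and follows essentially the same route as the paper: Proposition~\ref{non-CI-on-Zn} together with the coset-wise isomorphism $\Cay(\Z^n,\alpha(S))\cong\Gamma$ for necessity, and Corollary~\ref{conj-H} to reduce to $H=H'$ followed by Ryabchenko's result (which the paper invokes in the packaged form of Corollary~\ref{Zn-is-CI_f} rather than Theorem~\ref{isos-on-Zn-are-autos} directly) and the factorisation $\tau=\tau_1\tau_2$ for sufficiency. Your extra care in routing arbitrary finite-index subgroups through Theorem~\ref{simult-basis} before applying Corollary~\ref{conj-H} is a sound reading of a step the paper passes over quickly.
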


\begin{proof}
($\Rightarrow$)  We assume that $\Gamma$ is a (D)CI-graph.  By Proposition~\ref{non-CI-on-Zn}, $\Gamma$ must have a finite, square-free number of components.

Take any automorphism $\beta$ of $H$.  Then $\Cay(\Z^n,\beta(S)) \cong \Gamma$, so since $\Gamma$ is a (D)CI-graph, there must be some $\gamma \in \Aut(\Z^n)$ such that $\gamma(S)=\beta(S)$.  So $\gamma^{-1}\beta \vert_{H} \in \Aut(H)$ and fixes $S$.  Hence $\gamma^{-1}\beta\vert_{H} \in \Stab_{\Aut(H)}(S)$.  Also since $\beta \in \Aut(H),$ $H=\langle S \rangle$, and $\beta(S)=\gamma(S)$, we have $\gamma(H)=\beta(H)=H$, so $\gamma\vert_{H} \in \Aut(H)$.  Hence $\gamma\vert_H \in \Aut(H)_{\Z^n}$.  Therefore $\beta=(\gamma\vert_H)(\gamma^{-1}\beta\vert_H) \in \Aut(H)_{\Z^n} \cdot \Stab_{\Aut(H)}(S)$.  This shows that $\Aut(H) \le \Aut(H)_{\Z^n}  \cdot\Stab_{\Aut(H)}(S)$; since both of the groups in the product are subgroups of $\Aut(H)$, the other inclusion is immediate.

($\Leftarrow$)  Suppose that $\Gamma \cong \Gamma'=\Cay(\Z^n,S')$.  Let $H =\langle S \rangle$ and $H'=\langle S' \rangle$.  Let $k$ be the number of connected components of $\Gamma$ (and therefore of $\Gamma'$), so by assumption $k$ is finite and square-free.  Then $|H:\Z^n|=|H':\Z^n|=k$.  Since $k$ is finite, the rank of $H$ (and of $H'$) is also $n$.

By Corollary~\ref{conj-H}, we can conjugate both $H$ and $H'$ to $k\Z\times \Z^{n-1}$ using an element of $\Aut(\Z^n)$, so $H$ and $H'$ are conjugate to each other in $\Aut(\Z^n)$.  Thus, replacing $S'$ by a conjugate if necessary, we may assume without loss of generality that $H'=H$.

Now since $H'=H\cong \Z^n$ and we have $\Cay(H,S)\cong \Cay(H',S')=\Cay(H,S')$ is connected, Corollary~\ref{Zn-is-CI_f} tells us that this is a (D)CI-graph, so there is some $\tau \in \Aut(H)$ such that $\tau(S)=S'$.  By assumption, $\tau=\tau_1\tau_2$ where $\tau_1 \in \Aut(H)_{\Z^n}$ and $\tau_2 \in \Stab_{\Aut(H)}(S)$.  Now, since $\tau_2$ fixes $S$, we have $\tau_1(S)=\tau\tau_2^{-1}(S)=\tau(S)=S'$.  By definition of $\Aut(H)_{\Z^n}$, there is some $\sigma' \in \Aut(\Z^n)$ such that $\sigma'\vert_H=\tau_1$, so since $S \subseteq H$, we have $\sigma'(S)=\tau_1(S)=S'$.  This has shown that there is an automorphism of $\Z^n$ taking $S$ to $S'$, so $\Gamma$ is a (D)CI-graph.
\end{proof}

To demonstrate the importance of the rather odd-looking condition in our characterisation, that $\Aut(H)=\Stab_{\Aut(H)}(S) \cdot \Aut(H)_{\Z^n}$, we conclude with some examples in which this condition is not satisfied (so the graphs are not CI, despite having a finite and in many cases square-free number of connected components) and some examples in which it is satisfied (so the graphs are CI).

\begin{eg} The graph $\Cay(\Z^n,S)$ where $e_1, \ldots, e_n$ is the standard basis for $\Z^n$, $m>1$, and $S=\{\pm me_1, \pm e_2, \ldots, \pm e_n\}$, is not a CI-graph.
\end{eg}
\begin{proof}
Let $H=\langle S \rangle$.  Let $S'=\{\pm(me_1+e_2), \pm e_2, \ldots, \pm e_n\}$.  Clearly $S'$ and $S$ are both bases for $H$, so there is some automorphism $\sigma$ of $H$ that takes $S$ to $S'$.  By multiplying by an element of $\Stab_{\Aut(H)}(S)$ if necessary, if our condition were to hold, we would be able to find such a $\sigma$ that would extend to an automorphism of all of $\Z^n$. But since every entry of $me_1\in S$ is a multiple of $m$, and nothing in $S'$ has this property, there is no automorphism of $\Z^n$ that takes $me_1$ into $S'$, so in particular, $\sigma$ cannot extend to an automorphism of $\Z^n$.
\end{proof}

\begin{eg}  The graph $\Cay(\Z^2,S)$ where $S=\{\pm(2,0), \pm (0,1), \pm (2,1)\}$ satisfies the condition, so is a CI graph.
\end{eg}
\begin{proof}
Let $H=\langle S\rangle = 2\Z \times \Z$, so the graph has 2 components, which is a finite, square-free number.  Thus we only need to check the second condition of Theorem~\ref{main} to see that this is a CI graph.

Let $$\sigma=\begin{pmatrix} 1/2&0\\ 0 & 1\end{pmatrix}.$$  Then 
$\sigma$ is an isomorphism from $H$ to $\Z^2$, and $$\sigma(S)=\{\pm(1,0), \pm(0,1), \pm(1,1)\}.$$  Now, the stabiliser of $\sigma(S)$ in $\Aut(\Z^2)$ contains $$\Bigl\langle \begin{pmatrix}0 & -1 \\ 1 & -1\end{pmatrix} \Bigr\rangle,$$
which has order 3 mod $\pm I$.

Also, $$\Aut(H)_{\Z^2} = \left\{\phi\vert_H: \phi=\begin{pmatrix} a& b \\ c& d \end{pmatrix}, b\text{ is even and } ad-bc=\pm 1\right\},$$ where $\phi\vert_H$ denotes the restriction of $\phi$ to its action on $H$, so $$\sigma(\Aut(H)_{\Z^2})\sigma^{-1}=\left\{\begin{pmatrix} a& b \\ c& d \end{pmatrix}: c\text{ is even and } ad-bc=\pm 1\right\}.$$ This has index 3 in $\sigma \Aut(H)\sigma^{-1}=\mathsf{GL}(2,\Z)$, because if we consider the natural homomorphism onto $\mathsf{GL}(2,\Z_2)$, the image of this subgroup  consists of 2 of the 6 elements of $\mathsf{GL}(2,\Z_2)$, so has index 3.

Since the order of $\Stab_{\Aut(\Z^2)}(\sigma(S))$ has order at least 3, which is the index of $\sigma\Aut(H)_{\Z^2}\sigma^{-1}$ in $\mathsf{GL}(2,\Z)$, and the intersection of the subgroup of order 3 with $\sigma\Aut(H)_{\Z^2}\sigma^{-1}$ is trivial, we must have 
$$\sigma(\Aut(H)_{\Z^2})\sigma^{-1}\cdot\Stab_{\Aut(\Z^2)}(\sigma(S)) =\mathsf{GL}(2,\Z),$$ so conjugating by $\sigma^{-1}$ gives
$$ \Aut(H)_{\Z^2}\cdot\Stab_{\Aut(H)}(S)=\Aut(H),$$ satisfying the condition, as claimed.
\end{proof}

In fact, it turns out that the second condition of our characterisation will never be satisfied if the number of connected components in the Cayley (di)graph is sufficiently large relative to $n$.

\begin{cor}
For every $n >1$, there exists some natural number $k_n$ such that if $\Gamma=\Cay(\Z^n,S)$ is nonempty and locally finite with at least $k_n$ connected components, then $\Gamma$ is not a (D)CI-graph.
\end{cor}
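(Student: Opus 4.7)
The plan is to combine Theorem~\ref{main} with Minkowski's classical bound on finite subgroups of $\mathsf{GL}(n,\Z)$. The strategy is to show that the second condition of Theorem~\ref{main} must fail once the number $k$ of connected components is sufficiently large, while Proposition~\ref{non-CI-on-Zn} handles the case where $k$ is infinite or not square-free.

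In more detail, let $\Gamma=\Cay(\Z^n,S)$ be nonempty and locally finite, set $H=\langle S\rangle$, and write $k=|\Z^n:H|$ for the number of components. If $k$ is infinite or fails to be square-free, Proposition~\ref{non-CI-on-Zn} gives the conclusion, so assume $k$ is finite and square-free. Since being a (D)CI-graph is invariant under $\Aut(\Z^n)$, Corollary~\ref{conj-H} lets me replace $S$ by $\sigma(S)$ for a suitable $\sigma\in\Aut(\Z^n)$ so that $H=k\Z\times\Z^{n-1}$. Choosing the basis $f_1=ke_1,f_2=e_2,\ldots,f_n=e_n$ of $H$ identifies $\Aut(H)$ with $\mathsf{GL}(n,\Z)$, and a direct matrix calculation (of the same flavour as the second example above) shows that $\psi\in\Aut(H)$, represented by $M=(m_{ij})$ in this basis, extends to $\Aut(\Z^n)$ if and only if $k\mid m_{i1}$ for every $i\ge 2$. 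Reduction modulo $k$ maps $\mathsf{GL}(n,\Z)$ surjectively onto $\mathsf{GL}(n,\Z/k\Z)$ and sends $\Aut(H)_{\Z^n}$ onto the stabiliser $P_k$ of the cyclic subgroup $\langle e_1\rangle\le(\Z/k\Z)^n$. Orbit-stabiliser (applied to the transitive action on order-$k$ cyclic subgroups) together with the Chinese Remainder Theorem yields $[\Aut(H):\Aut(H)_{\Z^n}]=J_n(k)/\phi(k)=\prod_{p\mid k}(p^n-1)/(p-1)$, which for $n\ge 2$ and square-free $k$ is bounded below by $k$.

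Meanwhile, $\Stab_{\Aut(H)}(S)$ acts faithfully on the finite set $S$ (since $S$ generates $H$), so it is a finite subgroup of $\Aut(H)\cong\mathsf{GL}(n,\Z)$. By Minkowski's theorem, every finite subgroup of $\mathsf{GL}(n,\Z)$ injects into $\mathsf{GL}(n,\mathbb{F}_3)$ under reduction modulo~$3$, whence $|\Stab_{\Aut(H)}(S)|\le M(n):=|\mathsf{GL}(n,\mathbb{F}_3)|$, a constant depending only on $n$. If condition~(2) of Theorem~\ref{main} held, the standard index identity for products of subgroups would force $[\Aut(H):\Aut(H)_{\Z^n}]\le|\Stab_{\Aut(H)}(S)|\le M(n)$. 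Thus the choice $k_n=M(n)+1$ suffices: for any square-free $k\ge k_n$ the index strictly exceeds $M(n)$, so condition~(2) fails and $\Gamma$ is not a (D)CI-graph by Theorem~\ref{main}; the non-square-free case was already disposed of by Proposition~\ref{non-CI-on-Zn}. The main technical step is the identification $[\Aut(H):\Aut(H)_{\Z^n}]=J_n(k)/\phi(k)$, but this is essentially the matrix bookkeeping already carried out in the second example above; the remainder is a pigeonhole packaging of Minkowski's bound.
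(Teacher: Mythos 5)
Your argument is correct and follows essentially the same route as the paper's proof: dispose of the infinite or non-square-free case via Proposition~\ref{non-CI-on-Zn}, conjugate $H$ to $k\Z\times\Z^{n-1}$, bound $|\Stab_{\Aut(H)}(S)|$ by a Minkowski-type constant depending only on $n$ (the paper cites Platonov--Rapinchuk rather than reduction mod $3$, but it is the same bound), show that $[\Aut(H):\Aut(H)_{\Z^n}]\ge k$, and contradict the product decomposition of Theorem~\ref{main}. One small inaccuracy: reduction mod $k$ maps $\mathsf{GL}(n,\Z)$ onto the matrices of determinant $\pm 1$ in $\mathsf{GL}(n,\Z/k\Z)$, not onto all of $\mathsf{GL}(n,\Z/k\Z)$; this does not invalidate your index formula $\prod_{p\mid k}(p^n-1)/(p-1)$ (the coset count only needs transitivity of $\mathsf{SL}(n,\Z)$ on unimodular vectors mod $k$), and in any case the paper's weaker lower bound of $k$, obtained from a unipotent subgroup of order $k$ meeting the image of $\Aut(H)_{\Z^n}$ trivially, is all that is actually needed.
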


\begin{proof}
It is well-known (cf. the stronger result \cite[Theorem 4.3]{Plat-Rap} that implies this) that given $n$, there exists $k_n$ such that every finite subgroup of $\Aut(\Z^n)$ has order less than $k_n$.  

Let $\Gamma=\Cay(\Z^n,S)$ be a nonempty locally-finite (di)graph.  If the number of connected components of $\Gamma$ is infinite or square-free, then Proposition~\ref{non-CI-on-Zn} tells us that $\Gamma$ is not a (D)CI-graph.  So we can assume that $\Gamma$ has $k$ connected components, where $k$ is square-free and $H \cong \Z^n$, and that $k \ge k_n$. 
By Corollary~\ref{conj-H}, we can conjugate $S$ by an element of $\Aut(\Z^n)$ if necessary, to ensure that $H =k\Z \times \Z^{n-1}$.

Let $$\sigma=\begin{pmatrix}1/k&0&\ldots &0\\0&1&\ldots&0\\ \vdots&&\vdots\\0&0&\ldots&1\end{pmatrix}.$$ Then $\sigma$ is an isomorphism from
$H$ to  $\Z^n$. 
Since every finite subgroup of $\Aut(\Z^n)$ has order less than $k_n$, in particular $$|\Stab_{\Aut(\Z^n)}(\sigma(S))| < k_n.$$
Now $\Aut(H)_{\Z^n}$ is $$\left\{\phi\vert_H:\phi=\begin{pmatrix} b_{11}&\ldots& b_{1n} \\ \vdots& &\vdots\\b_{n1}&\ldots & b_{nn} \end{pmatrix}, \rm{det}(\phi)= \pm 1 \text{ and }b_{12},\ldots, b_{1n} \equiv 0 \ \mod{k} \right\},$$ so $\sigma(\Aut(H)_{\Z^n})\sigma^{-1}$ is $$\left\{\begin{pmatrix} b_{11}&\ldots& b_{1n} \\ \vdots& &\vdots\\b_{n1}&\ldots & b_{nn} \end{pmatrix}: \text{ determinant is} \pm 1 \text{ and }b_{21},\ldots, b_{n1} \equiv 0 \ \mod{k} \right\}.$$ 
We claim that this has index greater than $k$ in $\mathsf{GL}(n,\Z)=\sigma\Aut(H)\sigma^{-1}$.
If we consider the natural homomorphism onto $\mathsf{GL}(n,\Z_k)$, the subgroup of $\mathsf{GL}(n,\Z_k)$ consisting of matrices of the form 
$$\begin{pmatrix}1&0&0&\ldots&0\\ x&1&0&\ldots&0\\0&0&1&\ldots&0\\\vdots&&&\vdots&\vdots\\0&0&0&\ldots&1\end{pmatrix}$$  
where $x \in \Z_k$, has order $k$ and intersects the image of $\sigma(\Aut(H)_{\Z^n})\sigma^{-1}$ (which is a subgroup) in only $I$, so the index of the image of $\sigma(\Aut(H)_{\Z^n})\sigma^{-1}$ in $\mathsf{GL}(n,\Z_k)$ under this homomorphism must be at least $k$.  Hence the index of $\sigma(\Aut(H)_{\Z^n})\sigma^{-1}$ in $\mathsf{GL}(n,\Z)$ is at least $k$.

Now, if $\Gamma$ were to be (D)CI, then by Theorem~\ref{main}, we would have $\Aut(H)=\Aut(H)_{\Z^n} \cdot \Stab_{\Aut(H)}(S)$, so conjugating by $\sigma$, 
$$\mathsf{GL}(n,\Z)=\sigma\Aut(H)_{\Z^n}\sigma^{-1}\cdot \Stab_{\Aut(\Z^n)}(\sigma(S)).$$
  In particular, it would certainly need to be true that the index of $\sigma\Aut(H)_{\Z^n}\sigma^{-1}$ in $\mathsf{GL}(n,\Z)$ is no bigger than the order of $\Stab_{\Aut(\Z^n)}(\sigma(S))$.  But we have 
shown that this index is at least $k$, and that the order is less than $k_n \le k$,
 a contradiction that shows that $\Gamma$ cannot be (D)CI.
\end{proof}

\section{Acknowledgments}
The author is deeply indebted to Laci Babai for many long conversations about this problem that led to the terminology introduced in this paper. Ideas from these conversations also formed the basis of many of the results and proofs presented here.

\end{document}